\def\multiset#1#2{\ensuremath{\left(\kern-.3em\left(\genfrac{}{}{0pt}{}{#1}{#2}\right)\kern-.3em\right)}}
\newcommand{\midarrow}{\tikz \draw[-triangle 90] (0,0) -- +(.1,0);}
  \newcommand{\Ass}{\operatorname{Ass}}
 \newcommand{\supp}{\operatorname{supp}}
\newcommand{\rad}{\operatorname{rad}}
   \newcommand{\lcm}{\operatorname{lcm}}
\newcommand{\sdefect}{\operatorname{sdefect}}
\newcommand{\proset}{\,\mathrel{\lower 4pt\hbox{$\scriptscriptstyle/$}
\mkern -14mu\subseteq }\,} 
 \newtheorem{theorem}{Theorem}[section]
 \newtheorem{corollary}[theorem]{Corollary}
 \newtheorem{lemma}[theorem]{Lemma}
 \newtheorem{proposition}[theorem]{Proposition}
\newtheorem{notation}[theorem]{Notation}
 \theoremstyle{definition}
 \newtheorem{remark}[theorem]{Remark}
 \newtheorem{definition}[theorem]{Definition}
\title[Comparing  symbolic powers of edge ideals  of  weighted oriented graphs] {Comparing  symbolic powers of edge ideals  of weighted oriented graphs}
\author[M. Mandal and D.K. Pradhan ]{Mousumi Mandal$^*$ and Dipak Kumar Pradhan }
 \thanks{$^*$ Supported by SERB(DST) grant No.: $\mbox{MTR}/2020/000429$, India}
\thanks{AMS Classification 2010: 05C22, 05C25, 05C38, 05E40}
\address{Department of Mathematics, Indian Institute of Technology Kharagpur, 721302, India} \email{mousumi@maths.iitkgp.ac.in}
\address{Department of Mathematics, Indian Institute of Technology Kharagpur, 721302, India}\email{dipakkumar@iitkgp.ac.in}    
\begin{document}
\maketitle

\begin{abstract}    
Let $D$ be a weighted oriented graph  and $I(D)$ be its edge ideal. If $D$  contains an induced odd cycle    of length $2n+1$, under certain condition,  we show that   $ {I(D)}^{(n+1)} \neq {I(D)}^{n+1}$. We give necessary and sufficient condition for the equality of ordinary and symbolic powers of edge ideal of  weighted oriented graph having each  edge  in some induced  odd cycle of it. We characterize the  weighted naturally oriented unicyclic graphs with  unique odd cycles and weighted naturally oriented even cycles for the equality of ordinary and symbolic powers of their edge ideals. Let $ D^{\prime} $ be the  weighted oriented graph obtained from  $D$   after replacing the weights of vertices with non-trivial weights which are sinks, by  trivial weights. We show that the symbolic powers of $I(D)$ and $I(D^{\prime})$ behave in a similar way.
Finally,  if $D$ is any weighted oriented star graph, we prove that $ {I(D)}^{(s)} =   {I(D)}^s $ for all $s \geq 2.$

\noindent Keywords: Weighted oriented graph,  sink vertex,  edge ideal, symbolic power, induced odd cycle, even cycle, star graph.
\end{abstract}

\section{Introduction}
 Let $k$ be a field and $R=k[x_1,\ldots ,x_n]$ be a polynomial ring in $n$ variables. Let $I$ be a homogeneous ideal of $R$. Then for $s\geq 1$, the $s$-th symbolic power of $I$ is defined as $I^{(s)}=\displaystyle{\bigcap_{P\in \Ass I}(I^sR_P\cap R)}$. Geometrically, the symbolic powers are important since they capture, all the polynomials that vanish with a given multiplicity. We refer  \cite{huneke} to the reader to analyse the background results of symbolic powers of ideals.  By definition it is clear that $I^s\subseteq I^{(s)}$ for all $s\geq 1$ but the reverse containment may fail. It is always an interesting problem to find the necessary and sufficient condition for holding the reverse containment. There is no such criteria to be known when the equality $I^s = I^{(s)}$ holds for any arbitrary ideal. But for certain classes of ideals such as prime or radical ideals, there are equivalent conditions given by  Hochster in  \cite{hochster} and by  Li and Swanson in \cite{li}.
 In this paper we compare the ordinary and symbolic powers of edge ideals of weighted oriented graphs.  
\vspace*{0.1cm}\\
Let $D = (V (D), E(D), w)$ be a weighted oriented graph with the vertex set $V(D)=\{x_1,\ldots,x_n\},$  the edge set $E(D)$ consists of ordered pairs of the form $(x_i,x_j)$ which represents a directed edge from the vertex $x_i$ to the vertex  $x_j$   and the weight function $ w : V (D) \longrightarrow \mathbb N$.    
The weight of a vertex $x_i\in V(D)$ is $w(x_i)$  denoted by $w_i$ or $ w_{x_i}.$ If a vertex $x_i$ of $D$ is a source (i.e., has only arrows leaving $x_i$), we set $w_i = 1$. The edge ideal of $D$ is denoted by $I(D)$ and is defined as 
$I(D)=(x_ix_j^{w_j}|(x_i,x_j)\in E(D)).$ Let $G=(V(G),E(G))$ be the underlying graph of $ D $ whose
vertex set is $V(G) = V(D) $ and edge set is $E(G)= \{ \{x_i, x_j \}~|~(x_i, x_j) \in E(D)  \}   .$ The edge ideal of $ G $ is 
$I(G) = ( x_ix_j ~|~  \{x_i, x_j\} \in E(G) ).$  
\vspace*{0.1cm}\\ 
In general, even for monomial ideals comparison of ordinary and symbolic powers  is a difficult problem. For simple graphs, by \cite[Theorem 5.9]{simis}, we know that all the ordinary and symbolic powers
of edge ideal  coincide if and only if the graph is bipartite. But there is no such result for weighted oriented graphs. As the edge ideal of weighted oriented graph depends upon both the orientation of edges and weights on vertices, it is actually difficult to get the necessary and sufficient condition, even for a particular class of weighted oriented graphs. Recently in \cite{kanoy}, the authors characterize the 
weighted oriented complete graphs and weighted oriented complete bipartite graphs for the equality of all the ordinary and symbolic powers of their edge ideals. If a simple graph $G$ contains an induced odd cycle of length $2n+1$, we know that    $ {I(G)}^{(n+1)} \neq {I(G)}^{n+1} $.  We prove that the same is true for   weighted oriented graphs, if $D$  contains an induced odd cycle $   D^{\prime}     $    of length $2n+1$ with the condition 	`` $ V (D^{\prime}) \setminus  
N_D^+(V^{+}(D))$ contains one vertex which is not source in $D^{\prime}$, otherwise, it contains a  vertex which is source  in  $D^{\prime}$ with trivial weight in $D$",    then  $ {I(D)}^{(n+1)} \neq {I(D)}^{n+1}$ (see Proposition \ref{oddcycle}).  As one of its application, we characterize  the  weighted   oriented graph having each  edge  in some induced  odd cycle of it, for the equality of ordinary and symbolic powers of its edge ideal in Theorem \ref{oddcycle2}.    Also we characterize the  weighted naturally oriented unicyclic graphs with  unique odd cycles and weighted naturally oriented even cycles for the equality of ordinary and symbolic powers of their edge ideals (see Theorem \ref{unicyclic}, Corollary \ref{evencycle1} and Proposition \ref{evencycle2}, respectively).
\vspace*{0.1cm}\\
Let $ D^{\prime} $ be the  weighted oriented graph obtained from  $D$   after replacing the weights of vertices with non-trivial weights which are sinks, by  trivial weights. If we assume all vertices of $D$ with non-trivial weights are sinks, then $ D^{\prime} = G $ and in \cite{mandal1}, we proved that the symbolic powers of $I(D)$ and $I(G)$ behave in a similar way. In this paper, even if we do not assume all vertices of $D$  with non-trivial weights are sinks, we show that the symbolic powers of $I(D)$ and $I(D^{\prime})$  behave in a similar way (see Theorem \ref{sym.theorem.1}). We prove that the symbolic defects of edge ideals of $D$ and $D^{\prime}$ are same in Proposition \ref{sdefect} and so we have ${{I(D^{\prime})}}^{(s)} = {I(D^{\prime})}^s $  if and only if $ {I(D)}^{(s)} = {I(D)}^s $ for each $s \geq 1.$ As an instant application of this result, we characterize
the weighted oriented even cycles of length $ 4 $ which are not naturally oriented for the equality of all the ordinary and symbolic powers of their edge ideals (see Proposition \ref{evencycle4}).
Finally  in Theorem \ref{stargraph},  we prove the equality of  ordinary and
symbolic powers of edge ideal of any weighted oriented star graph.

\section{Preliminaries}

In this section, we recall some definitions and results for the weighted oriented graphs.
\begin{definition}
	A vertex cover $ C $ of $ D $ is a subset of $  V(D)  $ such that if $ (x, y) \in E(D) ,$ then
	$ x \in C $ or  $ y \in C . $ A vertex cover $ C $ of $ D $ is minimal if each proper subset of $ C $ is not a
	vertex cover of $ D. $ We set $(C)$ to be the ideal generated by the vertices of $C.$
\end{definition}
\begin{definition}
	Let $ x $ be a vertex of a weighted oriented graph $ D, $ then the sets $ N_D^+ (x) =	\{y ~|~ (x, y) \in E(D)\}  $ and  $ N_D^- (x) = \{y ~|~ (y, x) \in E(D)\}  $ are called the out-neighbourhood and the in-neighbourhood of $ x ,$ respectively. Moreover, the neighbourhood of $ x $ is the set $ N_D(x) = N_D^+ (x)\cup N_D^- (x) .$ For a subset of the vertices
	$ W \subseteq  V (D),  $  we define $ N_D^+(W) $, $ N_D^-(W) $ and $ N_D(W) $ similarly. For $T \subset V(D) ,$ we
	define the  induced  subgraph $\mathcal{D} = (V( \mathcal{D}), E(\mathcal{D}), w)$ of $D$ on $T$ to be the
	weighted oriented graph such that $V (\mathcal{D}) = T$ and for any
	$ u, v \in V (\mathcal{D}),  $  $ (u,v) \in E(\mathcal{\mathcal{D}})$  if and only if
	$(u,v) \in E(D)$.
	Here $ \mathcal{D} = (V (\mathcal{D}), E(\mathcal{D}), w) $
	is a weighted oriented graph with the same orientation  as in $D$ and for any $ u  \in V (\mathcal{D}), $
	if $ u $ is not a source in $ \mathcal{D}, $ then its weight equals to the weight of $ u $ in $D,$ otherwise, its
	weight in $ \mathcal{D} $ is $ 1. $ For a subset $W \subset V(D) $ of the vertices in $ D, $ define $ D \setminus  W $ to
	be the  induced subgraph of $ D $ with the vertices in $ W $ (and  their    incident edges) deleted.  Define $\deg_D(x) = |N_D(x)|$ for  $ x \in V(D) $. 
	 A vertex $ x \in V(D) $ is called a source vertex if $N_D (x)= N_D^+ (x) .$ A vertex $ x \in V(D) $ is called a sink vertex if $N_D (x)= N_D^- (x) .$
We set $V^+(D)$ as the set of vertices of $D$ with non-trivial weights.

\end{definition}
\begin{definition}\cite[Definition 4]{pitones}
Let $ C $ be a vertex cover of a weighted oriented graph $ D.$ We define \vspace*{0.2cm}\\
\hspace*{3cm}$ L_1^D(C) = \{x \in C ~|~ N_D^+ (x) \cap C^c \neq        \phi \}, $ \vspace*{0.2cm}\\
\hspace*{2.85cm} $L_2^D(C) = \{x \in C ~|~x\notin L_1^D(C) ~\mbox{and}~  N_D^-(x) \cap C^c \neq   \phi \}$ and  \vspace*{0.2cm}\\ 
\hspace*{2.8cm}  $ L_3^D(C) = C \setminus (L_1^D(C) \cup L_2^D(C))$ \vspace*{0.2cm}\\
 where $ C^c $ is the complement of $  C ,$ i.e., $ C^c = V(D) \setminus C. $
\end{definition}
\begin{lemma}\cite[Proposition 6]{pitones}\label{s.v.0}
Let $ C $ be  a  vertex  cover  of $ D. $  Then $ L_3^D(C) =\phi $ if  and  only  if $ C $ is  a minimal vertex cover of $ D .$  	

\end{lemma}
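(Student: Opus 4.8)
The plan is to unwind the definition of $L_3^D(C)$ and thereby reduce the statement to the classical description of minimal vertex covers. First I would note that, directly from the definitions of $L_1^D(C)$ and $L_2^D(C)$, a vertex $x \in C$ lies in $L_1^D(C) \cup L_2^D(C)$ precisely when $N_D^+(x) \cap C^c \neq \emptyset$ or $N_D^-(x) \cap C^c \neq \emptyset$, that is, precisely when $N_D(x) \cap C^c \neq \emptyset$. Hence $x \in L_3^D(C)$ if and only if $N_D(x) \subseteq C$, i.e. every neighbour of $x$ (equivalently, every neighbour of $x$ in the underlying graph $G$) already belongs to $C$. So the assertion to be proved becomes: $C$ is a minimal vertex cover of $D$ if and only if every $x \in C$ has at least one neighbour in $C^c$.

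For the implication ``minimal $\Rightarrow L_3^D(C)=\emptyset$'' I would argue by contrapositive. Suppose some $x \in L_3^D(C)$, so $N_D(x) \subseteq C$, and set $C' := C \setminus \{x\}$. I claim $C'$ is still a vertex cover of $D$: for an edge of $D$ not incident to $x$, the covering property of $C$ forces one of its endpoints into $C$, hence into $C'$; for an edge incident to $x$, say $(x,v)$ or $(v,x)$ in $E(D)$, the other endpoint satisfies $v \in N_D(x) \subseteq C$ and $v \neq x$, so $v \in C'$. Thus $C'$ is a vertex cover strictly contained in $C$, contradicting minimality; hence $L_3^D(C) = \emptyset$.

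For the converse, assume $L_3^D(C) = \emptyset$ and let $S \subsetneq C$ be a proper subset; choose $x \in C \setminus S$. Since $x \notin L_3^D(C)$, there is a neighbour $y$ of $x$ with $y \in C^c$, so the corresponding edge of $D$ joining $x$ and $y$ has neither endpoint in $C \setminus \{x\}$, and in particular none in $S \subseteq C \setminus \{x\}$. Hence $S$ is not a vertex cover of $D$, which shows $C$ is minimal. (Alternatively, one may invoke Remark~\ref{s.v.00} to transfer minimality to the underlying simple graph $G$ and quote the standard fact that a vertex cover of a graph is minimal exactly when each of its vertices has a neighbour outside the cover; the argument is identical.)

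I do not anticipate any genuine obstacle here. The points that merely require care are the bookkeeping over the two possible orientations of an edge when checking the covering property of $C'$, and the elementary remark that any subset of a non-cover is again a non-cover --- this is what lets the statement ``removing a single vertex always destroys the cover property'' upgrade to full minimality over all proper subsets.
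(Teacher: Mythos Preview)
Your argument is correct. Note that the paper does not actually supply its own proof of this lemma; it simply cites \cite[Proposition 6]{pitones}. Your first paragraph re-derives the characterization $x\in L_3^D(C)\Leftrightarrow N_D(x)\subseteq C$, which the paper records separately as Lemma~\ref{L3} (also quoted from \cite{pitones}); from there your two directions are the standard minimal-vertex-cover argument and would constitute a complete proof.
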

 
 \begin{lemma}\cite[Proposition 5]{pitones}\label{L3}  
 If   $ C $ is a vertex cover of $ D, $ then $ L_3^D(C) =\{x\in  C~|~N_D(x) \subset C\}.	 $  
 \end{lemma}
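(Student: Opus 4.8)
The plan is to prove the asserted description of $L_3^D(C)$ by a direct definition chase; both inclusions will fall out of the same chain of equivalences, so I would phrase the whole thing as a single ``if and only if'' with $x \in C$ fixed. The one structural fact I would record first is that $L_1^D(C)$, $L_2^D(C)$, $L_3^D(C)$ partition $C$ (immediate from the definitions), so that membership in $L_3^D(C)$ is the same as lying in $C$ but in neither $L_1^D(C)$ nor $L_2^D(C)$.

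So fix $x \in C$. Since $L_3^D(C) = C \setminus (L_1^D(C) \cup L_2^D(C))$, we have $x \in L_3^D(C)$ exactly when $x \notin L_1^D(C)$ and $x \notin L_2^D(C)$. Now $x \notin L_1^D(C)$ unwinds to $N_D^+(x) \cap C^c = \emptyset$, i.e.\ $N_D^+(x) \subseteq C$. For the second condition I would exploit that the definition of $L_2^D(C)$ already has the clause $x \notin L_1^D(C)$ built in: in the case at hand we are assuming $x \notin L_1^D(C)$, so the statement $x \in L_2^D(C)$ is equivalent to $N_D^-(x) \cap C^c \neq \emptyset$, and hence $x \notin L_2^D(C)$ is equivalent to $N_D^-(x) \subseteq C$. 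Combining, for $x \in C$ we get $x \in L_3^D(C)$ iff $N_D^+(x) \subseteq C$ and $N_D^-(x) \subseteq C$, which is precisely $N_D(x) = N_D^+(x) \cup N_D^-(x) \subseteq C$. That is the claimed equality $L_3^D(C) = \{x \in C \mid N_D(x) \subseteq C\}$.

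There is no genuine obstacle here, as the argument is purely formal; the only point that deserves a moment's care is the negation of the defining condition of $L_2^D(C)$, which is a conjunction. One must notice that its first conjunct, $x \notin L_1^D(C)$, is automatically in force in the relevant case, so that negating ``$x \in L_2^D(C)$'' collapses cleanly to negating only the in-neighbourhood clause $N_D^-(x) \cap C^c \neq \emptyset$, rather than producing a disjunction. With that observed, the manipulation with complements is transparent.
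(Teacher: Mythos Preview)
Your proof is correct; the definition chase is exactly right, and you handle the negation of the $L_2^D(C)$ clause carefully. Note that the paper does not prove this lemma at all but simply cites it as \cite[Proposition 5]{pitones}, so there is no in-paper argument to compare against; your direct verification from the definitions is precisely the standard one.
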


\begin{definition}\cite[Definition 7]{pitones}
A vertex cover $ C $ of $ D $ is strong if for each $ x \in L_3^D(C)  $ there is $ (y, x) \in 
E(D) $ such that $ y \in L_2^D(C) \cup L_3^D(C)$ with $  y \in V^+(D)$   (i.e., $ w(y) \neq  1 $).   
\end{definition}

\begin{remark}\cite[Remark 8, Proposition 5]{pitones}\label{s.v.1}
	A vertex cover	$ C $ of $D$  is strong if and only if for each $ x \in L_3^D(C),$   we have $N_D^{-}(x) \cap V^+(D) \cap [C\setminus {L_1^D{(C)}} ] \neq \phi.$
\end{remark}

\begin{lemma}\cite[Corollary 9.]{pitones}\label{minimal to strong}
	  If $ C $ is a minimal vertex cover of $ D, $ then $ C $ is strong.
\end{lemma}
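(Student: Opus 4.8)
The plan is to reduce the claim to the observation that the defining condition for a vertex cover to be strong is a condition \emph{only} on the vertices lying in $L_3^D(C)$; once one knows this set is empty, strongness holds vacuously. So the whole proof amounts to showing $L_3^D(C) = \emptyset$ when $C$ is minimal.

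First I would invoke Lemma \ref{s.v.0}, which states that $L_3^D(C) = \emptyset$ if and only if $C$ is a minimal vertex cover of $D$; applying the ``if'' direction to the hypothesis that $C$ is minimal gives $L_3^D(C) = \emptyset$ immediately. If one prefers a self-contained argument, the same conclusion follows from Lemma \ref{L3}, which identifies $L_3^D(C) = \{x \in C \mid N_D(x) \subset C\}$: for any $x \in C$, minimality of $C$ forces $C \setminus \{x\}$ not to be a vertex cover, so there is a neighbour $y \in N_D(x)$ with $y \notin C \setminus \{x\}$; since $y \neq x$ we get $y \in C^c$, whence $N_D(x) \not\subset C$ and $x \notin L_3^D(C)$. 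Either way, $L_3^D(C) = \emptyset$.

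With $L_3^D(C) = \emptyset$ in hand, the requirement in the definition of a strong vertex cover --- that for every $x \in L_3^D(C)$ there exist $(y,x) \in E(D)$ with $y \in L_2^D(C) \cup L_3^D(C)$ and $w(y) \neq 1$ --- is satisfied vacuously, so $C$ is strong. There is essentially no obstacle here: all of the content sits in the characterization of minimal vertex covers via $L_3$, and the only point requiring care is to apply that equivalence in the correct direction.
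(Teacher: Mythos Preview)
Your proposal is correct and matches the natural argument: the paper itself does not supply a proof here but simply cites \cite[Corollary 9]{pitones}, and in that reference the corollary is deduced exactly as you do, from the characterization $L_3^D(C)=\emptyset \Leftrightarrow C$ minimal (Lemma~\ref{s.v.0}), making the strongness condition vacuous.
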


\begin{definition}
A strong vertex cover $C$ of $D$ is said to be a maximal strong vertex cover of $D$ if it is not contained in any  other strong vertex cover of $D.$
\end{definition}


\begin{definition}\cite[Definition 32]{pitones}
	A weighted oriented graph $ D $ has the minimal-strong property if each
	strong vertex cover is a minimal vertex cover.

\end{definition}
\begin{lemma}\cite[Lemma 47]{pitones}\label{s.v.4}
	If the vertices of $ V^+(D) $ are sinks, then $ D $ has the minimal-strong property. 	
\end{lemma}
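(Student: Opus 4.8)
The plan is to argue by contradiction, using the characterization of minimal vertex covers in terms of the set $L_3^D(C)$. Let $C$ be an arbitrary strong vertex cover of $D$. By Lemma \ref{s.v.0}, $C$ is minimal if and only if $L_3^D(C) = \emptyset$, so to establish the minimal-strong property it suffices to show that $L_3^D(C)$ is empty.

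Suppose, towards a contradiction, that there exists $x \in L_3^D(C)$. Since $C$ is strong, by the definition of a strong vertex cover there is a directed edge $(y,x) \in E(D)$ with $y \in L_2^D(C) \cup L_3^D(C)$ and, crucially, $y \in V^+(D)$, i.e. $w(y) \neq 1$. By hypothesis every element of $V^+(D)$ is a sink, so $y$ is a sink; hence $N_D(y) = N_D^-(y)$, which forces $N_D^+(y) = \emptyset$. But the edge $(y,x) \in E(D)$ means precisely that $x \in N_D^+(y)$, a contradiction. Therefore no such $x$ exists, $L_3^D(C) = \emptyset$, and $C$ is a minimal vertex cover.

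I do not anticipate any serious obstacle here: the statement falls out directly once the definitions of \emph{sink} and \emph{strong vertex cover} are unwound, the only point that must be handled carefully being that the defining condition of a strong vertex cover demands an out-edge $(y,x)$ emanating from a non-trivially weighted vertex $y$, something a sink can never provide. One could equivalently run the argument through Remark \ref{s.v.1}: for any $x \in L_3^D(C)$, strongness would force $N_D^-(x) \cap V^+(D) \cap [C \setminus L_1^D(C)] \neq \emptyset$, and any vertex $y$ in this intersection lies in $V^+(D)$ yet has $x$ as an out-neighbour, so it is not a sink, contradicting the hypothesis; thus $L_3^D(C) = \emptyset$ and $C$ is minimal.
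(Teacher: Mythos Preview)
Your proof is correct. The paper does not supply its own proof of this lemma; it simply cites \cite[Lemma 47]{pitones}, so there is nothing to compare against beyond noting that your argument is the natural one: a strong vertex cover with $L_3^D(C)\neq\emptyset$ would require an out-edge from some $y\in V^+(D)$, which is impossible when all such $y$ are sinks.
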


\begin{definition}\cite[ Definition 19]{pitones}\label{definition19}   
Let $ C $ be a vertex cover of $ D. $  The irreducible ideal associated to $ C $ is the ideal 
$ I_C =:(L_1^D(C)\cup \{x_j^{w(x_j)} |x_j \in  L_2^D(C) \cup  L_3^D(C)    \}).$	
\end{definition}

\begin{lemma}\cite[Lemma 20]{pitones}\label{edge}
	Let $ D $ be a  weighted oriented graph. Then $I(D) \subseteq I_C$, for each vertex cover $ C $ of $ D $.  	
\end{lemma}

The next lemma describes the irreducible decomposition of the edge ideal of a  weighted oriented graph $ D $.

\begin{lemma}\cite[Theorem 25, Remark 26]{pitones}\label{s.v.2}
Let $ D $ be  a weighted oriented graph and $C_1,\ldots, C_s$ are the strong vertex covers of $ D ,$ then  the irredundant irreducible decomposition of $ I(D) $ is
    $$I(D) =  I_{C_1} \cap\cdots\cap I_{C_s} $$ 
where each $ I_{C_i} = ( L_1^D(C_i) \cup \{x_j^{w(x_j)}~|~x_j \in L_2^D(C_i) \cup L_3^D(C_i)\} ) ,$   $ \rad(I_{C_i})=P_i = (C_i)$.
\end{lemma}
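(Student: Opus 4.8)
The plan is to obtain the statement as the unique irredundant irreducible decomposition of the monomial ideal $I(D)$, through the standard dictionary between irreducible monomial ideals and ``boxes'' of standard monomials. Recall that every irreducible monomial ideal of $R$ has the form $\q=(x_{i_1}^{a_1},\dots,x_{i_k}^{a_k})$ with each $a_t\ge 1$, that $\rad\q=(x_{i_1},\dots,x_{i_k})$, and that the monomials of $R$ not lying in $\q$ form the box $B(\q)=\{\,m:\deg_{x_{i_t}}m< a_t\ \text{for all}\ t\,\}$; moreover it is standard that $I(D)$ has a unique irredundant decomposition $I(D)=\bigcap_j\q_j$ in which the $\q_j$ are exactly the irreducible monomial ideals $\q\supseteq I(D)$ whose box $B(\q)$ is maximal, under inclusion, among all such boxes. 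So the statement follows once these maximal boxes are identified with the ideals $I_C$, $C$ a strong vertex cover.

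First, for \emph{every} vertex cover $C$ one has $I(D)\subseteq I_C$, and $I_C$ is irreducible with $\rad I_C=(C)$: the last two facts are immediate from Definition \ref{definition19}, while for a generator $x_ix_j^{w_j}$ of $I(D)$ one verifies $x_ix_j^{w_j}\in I_C$ by cases --- if $x_j\notin C$ then $x_i\in C$ and $x_j\in N_D^+(x_i)\cap C^c$, so $x_i\in L_1^D(C)$ and $x_i\mid x_ix_j^{w_j}$; and if $x_j\in C$, then either $x_j\in L_1^D(C)$ and $x_j\mid x_ix_j^{w_j}$, or $x_j\in L_2^D(C)\cup L_3^D(C)$ and $x_j^{w_j}\mid x_ix_j^{w_j}$. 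Next, unwind the box condition: for an irreducible $\q=(x_{i_1}^{a_1},\dots,x_{i_k}^{a_k})$ with support $S=\{x_{i_1},\dots,x_{i_k}\}$, one has $I(D)\subseteq\q$ if and only if for every edge $(x_i,x_j)$, either $x_i\in S$ with exponent $1$ or $x_j\in S$ with exponent $\le w_j$. Consequently $S$ must be a vertex cover (an uncovered edge admits no such witness), and then each $x\in L_1^D(S)$ is forced to exponent $1$ (it has an out-neighbour outside $S$) and each $x\in L_2^D(S)$ to exponent $\le w_x$ (it has an in-neighbour outside $S$), whereas by Lemma \ref{L3} no constraint of this ``escaping-edge'' type touches a vertex of $L_3^D(S)$. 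In particular $I_C$ realises such a box for every strong $C$, and $I(D)\subseteq\bigcap_{C\ \mathrm{strong}}I_C$.

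The crux is the maximality analysis. Suppose a vertex cover $S$ is \emph{not} strong and pick $x\in L_3^D(S)$ having no in-neighbour in $(L_2^D(S)\cup L_3^D(S))\cap V^+(D)$; by Lemma \ref{L3} every in-neighbour $y$ of $x$ lies in $S$, hence must lie in $L_1^D(S)$ or satisfy $w_y=1$, and so carries exponent $1$ in any box supported on $S$ with $I(D)\subseteq\q$. Then the exponent of $x$ may be raised without bound --- that is, $x$ may be deleted from $S$ --- and a direct inspection of the only affected edges (those incident to $x$) shows the enlarged box still corresponds to an ideal containing $I(D)$; thus no box supported on a non-strong cover is maximal. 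Conversely, let $S$ be strong and consider the box of $I_S$ (exponent $1$ on $L_1^D(S)$, exponent $w_x$ on $L_2^D(S)\cup L_3^D(S)$). No exponent can be increased: $L_1$- and $L_2$-exponents are capped by escaping edges, and increasing the exponent of some $x\in L_3^D(S)$ beyond $w_x$ would, via the in-edge from a strong witness $y_0\in(L_2^D(S)\cup L_3^D(S))\cap V^+(D)$ of exponent $w_{y_0}\ge 2$, violate the criterion above. And the support cannot be shrunk: if $B(I_S)\subseteq B(\q')$ with $I(D)\subseteq\q'$ and support $S'\subsetneq S$, then any $x\in S\setminus S'$ must lie in $L_3^D(S)$ (otherwise an escaping edge of $x$ has no witness in $S'$), its strong witness $y_0$ must also lie outside $S'$ (an exponent of $1$ on $y_0\in S'$ being impossible since $w_{y_0}\ge 2$), and then the edge $(y_0,x)$ has both endpoints outside $S'$, which is incompatible with $I(D)\subseteq\q'$. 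Hence the maximal boxes are exactly the $B(I_C)$ with $C$ strong, and therefore $I(D)=I_{C_1}\cap\cdots\cap I_{C_s}$ irredundantly, with $\rad I_{C_i}=(C_i)$ by the second paragraph.

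I expect the main obstacle to be this last paragraph: matching the combinatorial strong-cover condition with box-maximality in both directions, especially the point that an $L_3$-vertex can be removed from the support precisely when it has no non-trivially weighted in-neighbour in $L_2^D(S)\cup L_3^D(S)$, and the verification that a given cover supports only one maximal box.
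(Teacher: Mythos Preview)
The paper does not prove this lemma; it is quoted from \cite{pitones} without argument, so there is no in-paper proof to compare against and I can only assess your attempt on its own merits.

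Your framework---identifying the irredundant irreducible components of a monomial ideal with the minimal irreducible monomial ideals containing it, and phrasing this in terms of maximal boxes---is correct and standard. Your verification that $I(D)\subseteq I_C$ for every vertex cover $C$, and your proof that $I_S$ is a minimal such ideal when $S$ is strong (both the same-support and the shrunk-support cases), are sound.

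There is, however, a genuine gap in the non-strong direction. You assert that every in-neighbour $y$ of the failing vertex $x\in L_3^D(S)$ ``carries exponent $1$ in any box supported on $S$ with $I(D)\subseteq\q$''. This is justified when $y\in L_1^D(S)$ (an escaping out-edge forces $a_y=1$) or when $y\in L_2^D(S)$ with $w_y=1$ (an escaping in-edge forces $a_y\le w_y=1$); but when $y\in L_3^D(S)$ with $w_y=1$ there is no escaping edge at $y$, and nothing in your argument forces $a_y=1$ in an arbitrary $\q$. You also do not verify that every out-neighbour $z$ of $x$ has $a_z\le w_z$, which is equally needed to delete $x$; this too can fail when $z\in L_3^D(S)$. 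A related omission: for strong $S$ you show that $B(I_S)$ is a maximal box, but not that it is the \emph{only} maximal box with support $S$; a priori some other minimal $\q$ on the same support could push an $L_3$-exponent above $w_x$ while dropping another exponent to $1$.

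All three issues are closed by one extra observation: if $\q\supseteq I(D)$ is irreducible with support $S$ and some $x\in S$ has $a_x>w_x$, then every in-neighbour of $x$ is forced to exponent $1$ (directly from the edge into $x$), and every out-neighbour $z$ has $a_z\le w_z$ (otherwise the edge $(x,z)$ would in turn force $a_x=1$, contradicting $a_x>w_x\ge 1$); hence $x$ can be deleted and $\q$ is not minimal. It follows that every minimal $\q$ on support $S$ has all exponents $\le w_x$, so contains $I_S$, so equals $I_S$. Your non-strong argument then goes through, because in $I_S$ itself the in-neighbours of the failing $x$ do all have exponent $1$ (they lie in $L_1$ or have weight $1$, and $I_S$ assigns exponent $w_y$ on $L_2\cup L_3$), and all out-neighbours have exponent $\le w_z$ by construction.
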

\begin{corollary}\cite[Remark 26]{pitones}\label{s.v.3}
	Let $ D $ be a weighted oriented graph. Then $ P $ is an associated 
	prime of $ I(D) $ if and only if $ P = (C) $ for some strong vertex cover $ C $ of $ D. $
\end{corollary}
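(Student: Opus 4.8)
The plan is to read the statement off directly from the irreducible decomposition recorded in Lemma \ref{s.v.2}. The only external input needed is the standard description of the associated primes of a monomial ideal: if $J \subset R$ is a monomial ideal with irredundant irreducible decomposition $J = Q_1 \cap \cdots \cap Q_t$, where each $Q_i$ is an irreducible monomial ideal (hence of the form $(x_{i_1}^{a_1},\dots,x_{i_r}^{a_r})$, so that $\rad(Q_i)$ is a monomial prime ideal), then $\Ass(J) = \{\rad(Q_1),\dots,\rad(Q_t)\}$. First I would invoke this general fact.

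Then I would apply it with $J = I(D)$. By Lemma \ref{s.v.2}, if $C_1,\dots,C_s$ denote the strong vertex covers of $D$, then $I(D) = I_{C_1} \cap \cdots \cap I_{C_s}$ is the irredundant irreducible decomposition of $I(D)$ and $\rad(I_{C_i}) = (C_i)$ for every $i$. Hence $\Ass(I(D)) = \{(C_1),\dots,(C_s)\}$. Consequently a prime $P$ belongs to $\Ass(I(D))$ precisely when $P = (C_i)$ for some $i$, which, because $C_1,\dots,C_s$ is exactly the list of all strong vertex covers of $D$, is equivalent to $P = (C)$ for some strong vertex cover $C$ of $D$. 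This settles both implications at once.

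I do not expect a genuine obstacle here: the statement is essentially a formal corollary of Lemma \ref{s.v.2}. The one point I would be careful to justify is the equivalence ``associated primes of a monomial ideal $=$ radicals of the components in an irredundant irreducible decomposition''; the inclusion $\subseteq$ is the general fact that $\Ass(R/J)$ is contained in the set of radicals of the components of any primary (in particular irreducible) decomposition, and $\supseteq$ uses irredundancy, which forces each $\rad(Q_i)$ to actually occur as an associated prime. This is entirely standard for monomial ideals (see, e.g., Herzog--Hibi, \emph{Monomial Ideals}). A harmless bookkeeping remark is that distinct strong vertex covers yield distinct primes, since $(C_i)$ is generated by the variables indexed by the vertices of $C_i$, so no collapsing occurs in the displayed set.
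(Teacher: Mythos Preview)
Your proposal is correct and matches the paper's treatment: the paper states this as a corollary of Lemma~\ref{s.v.2} (citing \cite[Remark 26]{pitones}) without giving an explicit argument, and your deduction from the irredundant irreducible decomposition together with the standard fact $\Ass(J)=\{\rad(Q_i)\}$ for monomial ideals is exactly the intended one-line justification.
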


Let $ I \subset R$ and $ I = Q_1\cap \cdots \cap Q_m $ be the primary decomposition of ideal $I$. For $ P \in \Ass(R/I), $
we denote $ Q_{\subseteq P} $ to be the intersection of all $ Q_i $ with 
$ \sqrt{Q_i} \subseteq P.  $ If $C$ is a  strong vertex cover of a weighted oriented graph $ D $, then $(C)  \in \Ass(R/I(D))$. 
We denote   $ I_{\subseteq {C}} $ as  $ I_{\subseteq {(C)}} $. In the following lemma, we write the \cite[Theorem 3.7]{cooper} for edge ideals of weighted oriented graphs.
\begin{lemma}\cite[Theorem 3.7]{cooper}\label{cooper} Let $I$ be the edge ideal of a weighted oriented graph $ D $ and $C_1,\ldots,C_r$ are the maximal strong vertex covers of $D.$ Then  $$I^{(s)}=(I_{\subseteq {C_1}})^s \cap\cdots\cap (I_{\subseteq {C_r}})^s.$$ 
\end{lemma}

\begin{lemma}\cite[Lemma 3.1]{mandal1}\label{mandal}
	Let $ D $ be a weighted oriented graph. If $ V(D) $ is a strong vertex cover of $ D ,$ then $ I(D)^{(s)}=I(D)^s $ for all $ s \geq 2.$  	
\end{lemma}  

\begin{lemma}\cite{kanoy}\label{kanoy}
	Let $ D $ be a weighted oriented graph. Then $ V (D) $ is a strong vertex cover if and
	only if $ N^+_D (V^+(D)) = V (D). $
\end{lemma}

 \begin{lemma}\cite[Corollary 3.8]{mandal1}\label{DG}
	Let $ D $ be a weighted oriented bipartite graph  where the vertices of   $V^{+}(D)$ are sinks. Then 
	$ {{I(D)}^{(s)}} = {{I(D)}^{s}} $ for all $ s \geq 2. $  	
\end{lemma} 

\begin{definition}  
	Let $ I \subset R $ be a monomial ideal. Let $ \mathcal{G}(I) $ be the set of minimal
	generators of the ideal $ I $. Let $ J $ be the ideal we need to add to $ I^s $ to achieve $ I^{(s)}, $ i.e.,
	$ I^{(s)} = I^s + J $. We set $ \sdefect(I,s) $ is the number of elements of $ \mathcal{G}(J) $.	
\end{definition}

The following  lemma based on the extension of  ideals.

\begin{lemma}\cite{atiyah}\label{atiyah}
Let $ f: A  \longrightarrow  B $ be a ring homomorphism. Let $I$ be an ideal of $A.$ The extension $I^e$ of  $I$  is the ideal $Bf(I)$  generated by $f(I)$ in $B$. If $I_1$ and $I_2$ are ideals of $ A $,   then	

\begin{enumerate}
	\item[(a)] $(I_1 \cap I_2)^{e} \subseteq (I_1)^{e} \cap (I_2)^{e}$,
	\item[(b)] $(I_1 I_2)^{e} = (I_1)^{e}  (I_2)^{e}$.  
\end{enumerate}

\end{lemma}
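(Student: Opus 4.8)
This is the standard elementary fact on extension of ideals recorded in Atiyah--Macdonald, so the plan is simply to unwind the definition $I^{e}=Bf(I)$: every element of $I^{e}$ is a finite sum $\sum_{i}b_{i}f(a_{i})$ with $b_{i}\in B$ and $a_{i}\in I$, and conversely any such sum lies in $I^{e}$. Both assertions then fall out by direct manipulation of such sums together with the homomorphism identity $f(xy)=f(x)f(y)$.

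\emph{Part (a).} First I would take an arbitrary $z\in(I_{1}\cap I_{2})^{e}$ and write $z=\sum_{i}b_{i}f(a_{i})$ with each $a_{i}\in I_{1}\cap I_{2}$. Since every $a_{i}$ lies in $I_{1}$, this same expression exhibits $z$ as an element of $Bf(I_{1})=(I_{1})^{e}$; since every $a_{i}$ also lies in $I_{2}$, it exhibits $z$ as an element of $(I_{2})^{e}$. Hence $z\in(I_{1})^{e}\cap(I_{2})^{e}$, which is the asserted inclusion. I would not attempt to reverse it, since in general $f$ is neither injective nor flat and the inclusion can be strict (e.g.\ for a non-injective $f$).

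\emph{Part (b).} I would prove the two inclusions separately. For ``$\subseteq$'': by definition $I_{1}I_{2}$ is generated as an ideal by the products $a_{1}a_{2}$ with $a_{1}\in I_{1}$ and $a_{2}\in I_{2}$, so a generic element of $(I_{1}I_{2})^{e}$ has the form $\sum_{k}b_{k}f(m_{k})$ with $m_{k}=\sum_{\ell}a_{k\ell}a'_{k\ell}$, $a_{k\ell}\in I_{1}$, $a'_{k\ell}\in I_{2}$; applying $f(xy)=f(x)f(y)$ gives $f(m_{k})=\sum_{\ell}f(a_{k\ell})f(a'_{k\ell})\in(I_{1})^{e}(I_{2})^{e}$, hence $\sum_{k}b_{k}f(m_{k})\in(I_{1})^{e}(I_{2})^{e}$. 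For ``$\supseteq$'': the product ideal $(I_{1})^{e}(I_{2})^{e}$ is generated by products $uv$ with $u\in(I_{1})^{e}$ and $v\in(I_{2})^{e}$; writing $u=\sum_{i}b_{i}f(a_{i})$ and $v=\sum_{j}c_{j}f(a'_{j})$ with $a_{i}\in I_{1}$, $a'_{j}\in I_{2}$, commutativity of $B$ together with multiplicativity of $f$ yields $uv=\sum_{i,j}b_{i}c_{j}f(a_{i}a'_{j})$ with $a_{i}a'_{j}\in I_{1}I_{2}$, so $uv\in(I_{1}I_{2})^{e}$.

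There is no genuine obstacle here: the argument is pure bookkeeping, and the only two points to keep straight are that the product of two ideals is generated by products of their elements (not by elementwise products) and that $f$ being a ring homomorphism gives $f(xy)=f(x)f(y)$. This lemma will be invoked later only to transport ordinary powers and intersections of monomial ideals through ring homomorphisms between polynomial rings, so the plain form stated above is exactly what is needed.
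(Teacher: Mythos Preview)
Your proof is correct and is the standard elementwise verification. The paper does not give its own proof of this lemma: it is simply quoted as \cite[Exercise 1.18]{atiyah}, so there is nothing to compare against beyond noting that you have supplied exactly the routine argument that Atiyah--Macdonald leaves as an exercise.
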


\begin{notation}
Let  $g \in  k[x_1,\ldots,x_n]$ be a monomial. We define support of $g$ $=  \{x_i:x_i \divides g\}  $ and we denote it by $\supp(g).$

\end{notation}

%
%

\section{Comparing ordinary and symbolic powers of weighted oriented graphs}
In this section, we compare the ordinary and symbolic powers of edge ideals of weighted oriented graphs containing  induced  odd cycles  and weighted naturally oriented even cycles.


In \cite[Proposition 4.10]{huneke}, if a simple graph contains an induced odd cycle $ C_{2n+1} = (x_1,\ldots,x_{2n+1}) $, the authors have shown that the $(n+1)-$th ordinary and symbolic power of its edge ideal are different. In this paper we  extend this  result for
weighted oriented graphs under certain condition.

\begin{proposition}\label{oddcycle}
	Let $ D $ be a  weighted oriented graph.
	Let $ D^{\prime} $ be an induced  odd cycle with underlying graph  $ C_{2n+1} = (x_1,\ldots,x_{2n+1}) $ where $ V (C_{2n+1}) \nsubseteq
	N_D^+(V^{+}(D))$ and it satisfies the condition
	`` $ V (C_{2n+1}) \setminus  
	N_D^+(V^{+}(D))$ contains one vertex which is not source in $D^{\prime}$, otherwise, it contains a  vertex which is source  in  $D^{\prime}$ with trivial weight in $D$". 
	Then $ I(D)^{(n+1)} \neq I(D)^{n+1}. $	    
\end{proposition}

\begin{proof}
	
	Let $w_i = w(x_i)$ for  $x_i \in V(C_{2n+1})$.  Let $f={x_1}^{a_1}\cdots{x_{2n+1}}^{a_{2n+1}}$ where each  $a_i=w_i$ if $ N^-_{D^{\prime}}(x_i)  \neq  \phi$ (i.e., $x_i$ is not source in $D^{\prime}$)  and $a_i=1$ if $ N^-_{D^{\prime}}(x_i)  =  \phi$ (i.e., $x_i$ is source in $D^{\prime}$). We claim that $ f \in   I(D)^{(n+1)} \setminus I(D)^{n+1}. $ We set $ m_{\{x_i,x_j\}} $ as the the minimal generator of $ I(D^{\prime})$ corresponding to the edge
	$ \{x_i, x_j\} \in E(C_{2n+1}) $. Note that $ m_{\{x_i,x_j\}} $ can be $ x_ix_j^{w_j} $ or $ x_jx_i^{w_i} $ and $x_i^{a_i} x_j^{a_j}$ is multiple of  $ m_{\{x_i,x_j\}} $. Let $ u \in  V (C_{2n+1}) \setminus N_D^+(V^+(D))$ be that vertex which is not source in $D^{\prime}$ and  if $u$ is source  in  $D^{\prime}$, then its weight is $1$ in $D$.
	Without loss of generality
	we can assume that $ u = x_1$. Here $N_{D^{\prime}}(x_1)= \{x_2, x_{2n+1}\}.$

	
	%

	Let $ C $ be a maximal strong vertex cover of $ D $.
	Suppose  $x_1 \notin C$. Then for any   strong vertex cover $C^{\prime} \subseteq C,$ $x_1 \notin C^{\prime}$ and so $x_2$ and $x_{2n+1} \in C^{\prime}$. If $(x_2,x_1) \in E(D^{\prime}),$ then for each strong vertex cover $C^{\prime} \subseteq C,$ $ x_1 \in  N^+_{D}(x_2) \cap {C^{\prime}}^c$ and hence $x_2 \in L_1^D(C^{\prime})$. This implies $x_2 \in I_{\subseteq C}.$ If $(x_1,x_2) \in E(D^{\prime}),$ then $x_2^{a_2} =   x_2^{w_2} \in I_{\subseteq C}.$ In both cases $x_2^{a_2} \in I_{\subseteq C}.$ By the same argument we can show $x_{2n+1}^{a_{2n+1}} \in I_{\subseteq C}.$
	By Lemma \ref{edge}, $m_{\{x_3,x_4\}},\ldots,m_{\{x_{2n-1},x_{2n}\}} \in I_{\subseteq C}$. So $x_{2}^{a_2}\cdot x_{2n+1}^{a_{2n+1}}\cdot m_{\{x_3,x_4\}}\cdots  m_{\{x_{2n-1},x_{2n}\}} \in ({I_{\subseteq C}})^{n+1}.$ Hence $f={x_1}^{a_1}\cdots{x_{2n+1}}^{a_{2n+1}}\in ({I_{\subseteq C}})^{n+1}$. Suppose $x_1 \in C$. By Remark \ref{s.v.1}, we have $x_1 \notin L^{D}_3(C)$ and by Lemma \ref{L3}, at least one element of $N_D(x_1)$ does not belong to $C$.  
	Then for any   strong vertex cover $C^{\prime} \subseteq C,$ at least one element of $N_D(x_1)$ does not belong to $C^{\prime}$ and so  $x_1 \in C^{\prime}$. If $x_1$ is not source in $D^{\prime}$,  $x_1^{a_1}  =  x_1^{w_1} \in I_{\subseteq C}.$ If $x_1$ is source in $D^{\prime}$, by our assumption  $w_1 = 1 $ in $D$ and hence $x_1 \in I_{\subseteq C}.$ In both cases $x_1^{a_1} \in I_{\subseteq C}.$ By Lemma \ref{edge}, $m_{\{x_2,x_3\}},\ldots,m_{\{x_{2n},x_{2n+1}\}} \in I_{\subseteq C}$. So $x_1^{a_1}\cdot m_{\{x_2,x_3\}}\cdots   m_{\{x_{2n},x_{2n+1}\}} \in ({I_{\subseteq C}})^{n+1}.$ Hence $f={x_1}^{a_1}\cdots{x_{2n+1}}^{a_{2n+1}}\in ({I_{\subseteq C}})^{n+1}$. Similarly for any  maximal strong vertex cover $ C $  of $ D $, we can show $f={x_1}^{a_1}\cdots{x_{2n+1}}^{a_{2n+1}}\in ({I_{\subseteq C}})^{n+1}$. Hence $f \in I(D)^{(n+1)}$. It remains to show that $f \notin I(D)^{n+1}.$ Since $ \supp(f) = V(D^{\prime})$  and $D^{\prime}$ is an induced subgraph of $D$, it is enough to show $f \notin I(D^{\prime})^{n+1}.$ Here $|\supp(f)| = 2n+1.$   Thus if we want to express $f$ as a multiple of product of some $n+1$ minimal generators of $I(D^{\prime}),$ then one $ x_i^{a_i} $ of $f$  must involve in two minimal generators of $I(D^{\prime})$. But by definition of $a_i,$ any $ x_i^{a_i} $ of $f$  can not involve in two minimal generators of $I(D^{\prime})$. 	Therefore $f={x_1}^{a_1}\cdots{x_{2n+1}}^{a_{2n+1}} \notin I(D^{\prime})^{n+1}.$ Thus $f \in I(D)^{(n+1)} \setminus I(D)^{n+1}$. Hence the proof follows.
\end{proof}	

\begin{remark}
	The above proposition may not be true if we remove the given condition.

\begin{figure}[!ht]
		\begin{tikzpicture}[scale=1]
		\begin{scope}[ thick, every node/.style={sloped,allow upside down}] 
		\definecolor{ultramarine}{rgb}{0.07, 0.04, 0.56} 
		\definecolor{zaffre}{rgb}{0.0, 0.08, 0.66}
		
		\draw [fill, black](7.5,-0.75) --node {\midarrow}(7.5,0.75);  
		\draw [fill, black](9,0) --node {\midarrow}(7.5,-0.75);
		\draw [fill, black](9,0) --node {\midarrow}(7.5,0.75);
		\draw[fill, black]  (10.5,0) --node {\midarrow}(9,0);

		\draw[fill] [fill] (7.5,-0.75) circle [radius=0.04];
		\draw[fill] [fill] (9,0) circle [radius=0.04];
		\draw[fill] [fill] (10.5,0) circle [radius=0.04];
		\draw[fill] [fill] (7.5,0.75) circle [radius=0.04];

		\node at (7.5,-1.3) {$x_2$};
		\node at (7.5,1.3) {$x_3$};
		\node at (9.1,0.5) {$x_{1}$};  
		\node at (10.5,0.5) {$x_{4}$};
		
		\node at (7,-0.9) {$1$};
		\node at (7,0.9) {$3$};
		\node at (9.1,-0.5) {$3$};  
		\node at (10.5,-0.5) {$1$};

		\draw [fill, black](13,-0.75) --node {\midarrow}(13,0.75);  
		\draw [fill, black](14.5,0) --node {\midarrow}(13,-0.75);
		\draw [fill, black](14.5,0) --node {\midarrow}(13,0.75);

		\draw[fill] [fill] (13,-0.75) circle [radius=0.04];
		\draw[fill] [fill] (14.5,0) circle [radius=0.04];
		\draw[fill] [fill] (13,0.75) circle [radius=0.04];

		\node at (13,-1.3) {$x_2$};
		\node at (13,1.3) {$x_3$};
		\node at (14.6,0.5) {$x_{1}$};

		\node at (12.5,-0.9) {$1$};
		\node at (12.5,0.9) {$3$};
		\node at (14.6,-0.5) {$1$};  	
		
		\node at (8.6,-1.6) {$D$};
		\node at (13.7,-1.6) {$D^{\prime}$};	

		\end{scope}
		\end{tikzpicture}
		\caption{A weighted oriented graph $D$ containing an induced odd  cycle  $D^{\prime}$ of length $3$.}\label{fig.3}
	\end{figure}
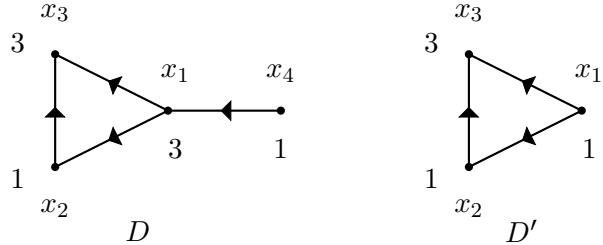  
 
 \hspace*{0.5cm}For example consider the  weighted oriented graph $D$ as in Figure \ref{fig.3}. Then $I(D) = (x_1x_2,x_2x_3^3,x_1x_3,x_4x_1^3)$. Here $D$ contains an induced  cycle  $D^{\prime}$ of length $3$ where $ x_2\in  N_D^+(V^+(D)) $,  $x_3  \in  N_D^+(V^+(D))$ and   $ x_1 \in  V (D^{\prime}) \setminus N_D^+(V^+(D))$. Note that $x_1$ is  source in $D^{\prime}$ but $w(x_1) \neq 1$ in $D$. 
Using Macaulay $ 2 $, we see that $I(D)^{(2)} = I(D)^2$. 	
	
\end{remark}


Now we see some applications of Proposition \ref{oddcycle} to  weighted oriented graphs containing  induced  odd cycles.

\begin{theorem}\label{oddcycle2}
	Let $ D $ be a weighted oriented graph such that each edge of $ D $ lies in some induced   odd cycle of it.
	Then $ V(D) $ is a strong vertex cover of $ D $ if and only
	if $ {I(D)}^{(s)} = {I(D)}^s $ for all $ s \geq 2. $

\end{theorem}

\begin{proof}
	If  $ V(D) $ is a strong vertex cover of $ D $, then by Lemma \ref{mandal}, $ {I(D)}^{(s)} = {I(D)}^s $ for all $ s \geq 2. $

	Assume that $ {I(D)}^{(s)} = {I(D)}^s $ for all $ s \geq 2. $ Suppose $ V (D) $ is not a strong vertex cover. By Lemma \ref{kanoy}, we have $ N^+_D (V^+(D)) \neq V (D). $ Let $u \in   V(D)  \setminus N^+_D (V^+(D)).$
	
	\textbf{Case (1)} Suppose $u$ is not source in $D$. 
	
	Since $u$ is not source in $D$,  there exists some induced  odd cycle $D^{\prime}$ of $D$ such that $u  $ is not  source in $D^{\prime}$. Here $u \in   V(D^{\prime})  \setminus N^+_D (V^+(D))$. If $|V(D^{\prime})| = 2m+1$ for some $m$, then by Proposition \ref{oddcycle}, we get $ I(D)^{(m+1)} \neq I(D)^{m+1},$ which is a contradiction.

	\textbf{Case (2)} Suppose $u$ is source in $D$. 
	
	Then consider any
	induced  odd cycle $D^{\prime\prime}$ of $D$ containing the vertex $u$. Here $u$ is source in $D^{\prime\prime}$ and $w(u)=1$ in $D$. Here $u \in   V(D^{\prime\prime})  \setminus N^+_D (V^+(D))$. If $|V(D^{\prime\prime})| = 2k+1$ for some $k$, then by Proposition \ref{oddcycle}, we get $ I(D)^{(k+1)} \neq I(D)^{k+1},$ which is a contradiction.  	
\end{proof}

The following result is an immediate consequence of the above result.
\begin{corollary}\label{oddcycle3}
	Let $ D $ be a weighted oriented odd cycle.
	Then $ V(D) $ is a strong vertex cover of $ D $ if and only
	if $ {I(D)}^{(s)} = {I(D)}^s $ for all $ s \geq 2. $

\end{corollary}


\begin{corollary}\label{clique.oddcycle.completegraph}
	Let $ D $ be a weighted oriented graph with underlying graph G is a clique
	sum of finite number of odd cycles and complete graphs. Then $ V(D) $ is a strong vertex cover of $ D $ if and only
	if $ {I(D)}^{(s)} = {I(D)}^s $ for all $ s \geq 2. $

\end{corollary}

\begin{proof}
In a complete graph, each edge of complete graph lies in some induced odd cycle of length $3$.   Thus each edge of $ D $ lies in some induced   odd cycle of it.	Hence the proof follows from Theorem \ref{oddcycle2}.  	
\end{proof}

\begin{corollary}\label{m-partite graph}
	Let $ D $ be a weighted oriented graph with underlying graph G is a complete $m-$partite graph for some $m \geq 3$. Then $ V(D) $ is a strong vertex cover of $ D $ if and only
	if $ {I(D)}^{(s)} = {I(D)}^s $ for all $ s \geq 2. $

\end{corollary}

\begin{proof}
 Note that each edge of $G$ lies in some induced odd cycle  of length $3$. Therefore each edge of $ D $ lies in some induced   odd cycle of it.	Hence the proof follows from Theorem \ref{oddcycle2}.  	
\end{proof}

In the next result we see that presence of  certain induced weighted naturally oriented path guarantees the failure in the equality of $3$rd ordinary and symbolic power of edge ideal of  weighted oriented graph.

\begin{definition}
	A  path is naturally oriented if  all edges of path are oriented in one direction.	
\end{definition}       

\begin{lemma}\label{atmost}  
	Let $ D $ be a  weighted oriented graph such that at most one edge oriented into each vertex.
	Let	$D^{\prime}$ be an induced weighted naturally oriented path  of length $3$ of $D$ with  $V(D^{\prime}) = \{x_{i-1},x_i,x_{i+1},x_{i+2}\}$, $E(D^{\prime}) = \{ (x_{j},x_{j+1}) ~|~ i-1 \leq j \leq  i+1 \}$, $w(x_i) \geq 2$ and $w(x_{i+1}) = 1$.
	Then $ {I(D)}^{(3)} \neq {I(D)}^3 $.           	
\end{lemma}

\begin{proof}
Let $w_j = w(x_j)$ for  $x_j \in V(D^{\prime})$. 	We claim $g= x_{i-1}x_i^{w_i}x_{i+1}^2x_{i+2}^{w_{i+2}}  \in   {I(D)}^{(3)}$. Let $ C $ be a maximal strong vertex cover of $ D $.
	Suppose  $x_{i+2} \notin C$. Then for any   strong vertex cover $C^{\prime} \subseteq C,$ $x_{i+2} \notin C^{\prime}$ and so $x_{i+1}$  $ \in C^{\prime}$. Since $(x_{i+1},x_{i+2}) \in E(D),$ for each strong vertex cover $C^{\prime} \subseteq C,$ $ x_{i+2} \in N^+_{D}(x_{i+1}) \cap  {C^{\prime}}^c$ and hence $x_{i+1} \in L_1^D(C^{\prime})$. This implies $x_{i+1} \in I_{\subseteq C}.$ By Lemma \ref{edge}, $x_{i-1}x_i^{w_i} \in I_{\subseteq C}$. So  $ x_{i-1}x_i^{w_i}.(x_{i+1})^2 \in ({I_{\subseteq C}})^{3}$. Hence $g \in ({I_{\subseteq C}})^{3}$. Suppose $x_{i+2} \in C$. By definition of $D,$  $|N^-_{D}(x_{i+2})|=1$.  Since  $N^-_{D}(x_{i+2}) = \{x_{i+1}\} \nsubseteq V^+(D),$   we have $x_{i+2} \notin L^{D}_3(C)$ and by Lemma \ref{L3}, at least one element of $N_D(x_{i+2})$ does not belong to $C$.  
	Then for any   strong vertex cover $C^{\prime} \subseteq C,$ at least one element of $N_D(x_{i+2})$ does not belong to $C^{\prime}$ and so  $x_{i+2} \in C^{\prime}$. This implies $x_{i+2}^{w_{i+2}}  \in I_{\subseteq C}.$  By Lemma \ref{edge}, $ x_ix_{i+1}    \in I_{\subseteq C}$. So $(x_ix_{i+1})^2.x_{i+2}^{w_{i+2}}  \in ({I_{\subseteq C}})^{3}.$ Hence $g \in ({I_{\subseteq C}})^{3}$.
	Similarly for any  maximal strong vertex cover $ C $  of $ D $, we can prove that $g \in ({I_{\subseteq C}})^{3}$. Therefore $g \in I(D)^{(3)}$. Notice that $g \notin I(D^{\prime})^{n+1}.$ Since $ \supp(g) = V(D^{\prime})$  and $D^{\prime}$ is an induced subgraph of $D$, $g \notin I(D)^{3}.$  
	Hence the result follows.	
\end{proof}

Next we see some applications of Lemma \ref{atmost} to some  weighted oriented graphs. 

\begin{definition}
A  cycle is naturally oriented if  all edges of cycle are oriented in clockwise direction. In a naturally oriented unicyclic graph, the cycle is naturally oriented and each edge of the tree connected with the cycle is oriented away from the cycle.	
\end{definition}

\begin{remark}\label{svc1}    
	Let $D$ be a weighted naturally oriented unicyclic graph. By \cite[Proposition 15]{pitones}, $V(D)$ is a strong vertex cover of $D$ if and only if $D$ is naturally oriented and $w(x) \geq 2$ when $\deg_D(x)\geq 2 $ for all $x \in V(D)$.
\end{remark}

%

In the next result, we characterize the  weighted naturally oriented unicyclic graphs with a unique odd cycles for the equality of ordinary and symbolic powers of their edge
ideals.

\begin{theorem}\label{unicyclic}
	Let    $D$  be a weighted naturally oriented unicyclic graph with a unique odd cycle $C_{2n+1} = (x_1,\ldots,x_{2n+1})$. Then   $ I(D)^{(s)} = I(D)^s $ for all  $s \geq   2$  if and only if  $w(x) \geq 2$ when $\deg_D(x)\geq 2 $ for all $x \in V(D)$. 
\end{theorem}

\begin{proof}
	If $w(x) \geq 2$ when $\deg_D(x)\geq 2 $ for all $x \in V(D)$, then by Remark \ref{svc1} and Lemma \ref{mandal}, we have ${I(D)}^{(s)}={I(D)}^s $ for all $ s \geq 2.$
	
	Now we assume that ${I(D)}^{(s)}={I(D)}^s $ for all $ s \geq 2.$  First we claim $w(x) \neq 1$  for all  $x \in V(C_{2n+1})$. Suppose its not true.  Without loss of generality we can assume that  $w(x_1)=1.$  Here $N_D^-(x_2) = \{x_1\} \nsubseteq V^+(D)$. Thus $ x_2 \in  V (C_{2n+1}) \setminus N_D^+(V^+(D))$. Hence by Proposition \ref{oddcycle}, $ I(D)^{(n+1)} \neq I(D)^{n+1} $, which is a contradiction. So our claim follows.      
	
	%
	
	Now we claim $w(x) \geq 2$ when $\deg_D(x)\geq 2 $ for all $x \in V(D)\setminus V(C_{2n+1})$. Suppose its not true. Then there exists some  $x_{i_t} \in   V(D)\setminus V(C_{2n+1})      $ such that   $\deg_D(x_{i_t}) \geq 2$ and $w(x_{i_t} )  = 1 $. Without loss of generality we can assume that $x_{i_t} $ is in some tree $T$ connected with $x_1$. As $T$ is a tree, there is only one path $P$ from $x_1 $ to $ x_{i_t} $. Let $P =  x_1x_{i_1} x_{i_2}\ldots x_{i_t}$ be that path whose length is $t$ and since $\deg_D(x_{i_t}) \geq 2$, there exists some $y_j \in N_D^+(x_{i_t})$. Without loss of generality we can assume that $t$ be the least integer such that $\deg_D(x_{i_t}) \geq 2$ and $w(x_{i_t} )  = 1 $. That means  $w(x_{i_1} )  \geq 2,\ldots,w(x_{i_{t-1}} )  \geq 2 $ and $ w(x_{i_{t}} )  = 1 $. Note that $t \geq 1$.
	
\textbf{Case (1)} Suppose $t =1 $.
	
There exists  an induced weighted naturally oriented path $D^{\prime}$  of $D$   with  $V(D^{\prime}) = \{x_{2n+1},x_1,\\x_{i_1},y_j\}$, $E(D^{\prime}) = \{ (x_{2n+1},x_1),(x_{1},x_{i_1}),(x_{i_1},y_j) \}$, $w(x_1) \geq 2$ and $w(x_{i_1}) = 1$.

\textbf{Case (2)} Suppose $t =2 $.

There exists  an induced weighted naturally oriented path $D^{\prime}$  of $D$   with  $V(D^{\prime}) = \{x_1,x_{i_1},\\x_{i_2},y_j\}$, $E(D^{\prime}) = \{ (x_{1},x_{i_1}),(x_{i_1},x_{i_2}),(x_{i_2},y_j) \}$, $w(x_{i_1}) \geq 2$ and $w(x_{i_2}) = 1$.

\textbf{Case (3)} Suppose $t \geq 3 $.

There exists  an induced weighted naturally oriented path $D^{\prime}$  of $D$  with  $V(D^{\prime}) = \{x_{i_{t-2}},x_{i_{t-1}},\\x_{i_t},y_j\}$, $E(D^{\prime}) = \{(x_{i_{t-2}},x_{i_{t-1}}),(x_{i_{t-1}},x_{i_t}   ), (x_{i_t},y_j) \}$, $w(x_{i_{t-1}}) \geq 2$ and $w(x_{i_t}) = 1$.

Thus by Lemma \ref{atmost}, for any $t \geq 1$, we have   $ {I(D)}^{(3)} \neq {I(D)}^3 $, which is a contradiction. Hence the claim follows.
\end{proof}

%
%
%

Now we compare the ordinary and symbolic powers of edge ideals of weighted oriented even cycles.

We observe that   \cite[Lemma 48]{pitones} is true for weighted  oriented even  cycle of length $4$. Hence  we get the following result.
\begin{lemma}\cite[Lemma 48]{pitones}\label{C6}  
	Let $ D $ be  a  weighted  oriented  cycle with underlying graph is $  C_n = (x_1,\ldots,x_n) $ where $ n \geq 4 $ and $ n \neq 5 .$   Then $ D $ has the minimal-strong property if and only if the vertices of $ V^+(D) $ are sinks.  
\end{lemma}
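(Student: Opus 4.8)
The plan is to prove the two implications separately, the forward one being immediate and all the work lying in the converse. If every vertex of $V^+(D)$ is a sink, then $D$ has the minimal-strong property by Lemma~\ref{s.v.4}, which imposes no condition on the underlying graph, so that direction needs nothing beyond citing Lemma~\ref{s.v.4}. For the converse I will argue the contrapositive: assuming some vertex $x \in V^+(D)$ is \emph{not} a sink, I will construct a strong vertex cover of $D$ that fails to be minimal. First observe that $x$ cannot be a source, by the standing convention that sources carry weight $1$; since $x$ is not a sink and has exactly two neighbours in $C_n$, exactly one of them receives an arrow from $x$ and the other one sends an arrow to $x$. Relabelling the cycle (and reversing its orientation if necessary), we may assume $x = x_i$ with $(x_i, x_{i+1}) \in E(D)$ and $(x_{i-1}, x_i) \in E(D)$, so that $N_D^+(x_i) = \{x_{i+1}\}$ and $x_i \in N_D^-(x_{i+1})$.

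I then build the cover $C$ by putting $x_i, x_{i+1}, x_{i+2} \in C$ and $x_{i-1}, x_{i+3} \notin C$, and on the complementary arc $x_{i+4}, x_{i+5}, \dots, x_{i-2}$ choosing any vertex cover of that sub-path that contains both of its end vertices and has no three consecutive vertices --- concretely, the odd-position vertices of the arc together with its last vertex. When $n = 4$ this is degenerate, since then $x_{i+3} = x_{i-1}$ and the arc is empty; there one simply takes $C = \{x_i, x_{i+1}, x_{i+2}\}$. (The excluded value $n = 5$ is precisely the one for which no such construction exists: there $x_{i-1}$ and $x_{i+3}$ are adjacent, so they cannot both be omitted from a vertex cover.) It then remains to verify, all routinely, that: (i) $C$ is a vertex cover --- the only edges in question are $\{x_{i+3}, x_{i+4}\}$ and $\{x_{i-2}, x_{i-1}\}$, covered by the arc's end vertices; (ii) $L_3^D(C) = \{x_{i+1}\}$ by Lemma~\ref{L3}, since $x_i$ and $x_{i+2}$ and the two end vertices of the arc are kept out of $L_3^D(C)$ by $x_{i-1}, x_{i+3} \notin C$, while no interior arc vertex qualifies because the arc cover has no three consecutive vertices --- in particular $L_3^D(C) \neq \emptyset$, so $C$ is non-minimal by Lemma~\ref{s.v.0}; and (iii) $C$ is strong, which by Remark~\ref{s.v.1} amounts to $N_D^-(x_{i+1}) \cap V^+(D) \cap (C \setminus L_1^D(C)) \neq \emptyset$ --- and $x_i$ itself lies in this set, as $x_i \in N_D^-(x_{i+1})$, $x_i \in V^+(D)$, $x_i \in C$, and $x_i \notin L_1^D(C)$ because $N_D^+(x_i) = \{x_{i+1}\} \subseteq C$. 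This yields a strong non-minimal vertex cover, completing the contrapositive.

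The hard part will be the construction of $C$: one must shrink $L_3^D(C)$ down to the single vertex $x_{i+1}$, which carries the legitimate witness $x_i$ of non-trivial weight, while keeping $C$ an honest vertex cover; this is exactly the place where the hypothesis $n \neq 5$ enters, and where the case $n = 4$ needs its own (trivial) treatment. Once $C$ is in hand the remaining steps are short applications of Lemma~\ref{L3}, Lemma~\ref{s.v.0} and Remark~\ref{s.v.1}.
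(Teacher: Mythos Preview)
Your argument is correct. The paper itself does not give a self-contained proof of this lemma: its entire proof reads ``It follows from the same argument as in \cite[Lemma 48]{pitones}.'' So there is nothing to compare against beyond the cited reference, and your construction --- forcing $L_3^D(C)=\{x_{i+1}\}$ by placing $x_i,x_{i+1},x_{i+2}$ in $C$, omitting $x_{i-1},x_{i+3}$, and covering the residual arc without three consecutive vertices --- is exactly the natural way to exhibit a non-minimal strong cover, and is presumably what the argument in \cite{pitones} does. Your treatment of the degenerate case $n=4$ and your explanation of why $n=5$ must be excluded are both correct; the verification that $x_i\notin L_1^D(C)$ (because $N_D^+(x_i)=\{x_{i+1}\}\subseteq C$) is the key point, and you have it.
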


%
%
%


%
%

\begin{theorem}\label{evencycle} 
	  Let $ D $ be a  weighted  oriented even cycle. If   $ V(D) $ is a
	strong vertex cover of $ D $ or
	$ D $ has the minimal-strong property, then $ {I(D)}^{(s)} = {I(D)}^s $ for all $s \geq 2.$      
\end{theorem}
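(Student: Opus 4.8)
The plan is to treat the two sufficient conditions in the hypothesis independently, since each is already essentially covered by a result recalled in Section~2.

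For the first case, assume $V(D)$ is a strong vertex cover of $D$. Then Lemma~\ref{kanoy} applies verbatim: it asserts precisely that $V(D)$ being a strong vertex cover implies $I^{(s)} = I^s$ for all $s \geq 2$. So this case requires no work beyond citing that lemma.

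For the second case, assume $D$ has the minimal-strong property. The underlying graph here is an even cycle $C_n$, so $n$ is even and in particular $n \geq 4$; thus Corollary~\ref{msp} applies and tells us that every vertex of $V^+(D)$ is a sink. The key observation is then that an even cycle is a bipartite graph: $V(C_n)$ splits into the odd-indexed and even-indexed vertices, and every edge joins one class to the other. Hence $D$ is a weighted oriented bipartite graph whose underlying graph is bipartite and all of whose non-trivially weighted vertices are sinks --- exactly the setting of Lemma~\ref{DG}. That lemma then gives $I^{(s)} = I^s$ for all $s \geq 1$, which is stronger than what is needed.

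Putting the two cases together proves the theorem. I do not expect a serious obstacle here; the only step that is not purely mechanical is noticing that the bipartiteness of an even cycle is what lets us route the minimal-strong hypothesis through Corollary~\ref{msp} and then feed it into Lemma~\ref{DG}. Everything else is a direct appeal to previously established results.
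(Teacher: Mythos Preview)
Your proposal is correct and follows exactly the same route as the paper's proof: Lemma~\ref{kanoy} for the first hypothesis, and Corollary~\ref{msp} followed by Lemma~\ref{DG} for the second. The only difference is that you spell out why an even cycle is bipartite, which the paper leaves implicit.
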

\begin{proof}
If $ V(D) $ is a
strong vertex cover of $ D ,$ then  by Lemma \ref{kanoy},  we get $ {I(D)}^{(s)} = {I(D)}^s $ for all $s \geq 2.$

If 	$ D $ has the minimal-strong property, then by Lemma \ref{C6} and Lemma \ref{DG}, we have $ {I(D)}^{(s)} = {I(D)}^s $ for all $s \geq 2.$    
\end{proof}
\begin{remark}\label{one.weight}
	Converse of the Theorem \ref{evencycle} need not be true.\\	\hspace*{0.5cm}For example consider the  weighted oriented even cycle $D$ as in Figure \ref{fig.1}. Let $I= I(D)$.	
	Then $ I   = ( x_1x_2^{w_2},x_2x_3,x_3x_4,x_4x_1   )$.
	\begin{figure}[!ht]
		\begin{tikzpicture}[scale=0.85]
		\begin{scope}[ thick, every node/.style={sloped,allow upside down}] 
		\definecolor{ultramarine}{rgb}{0.07, 0.04, 0.56} 
		\definecolor{zaffre}{rgb}{0.0, 0.08, 0.66}   
		\draw[fill, black] (0,0) --node {\midarrow}(0,2);
		\draw[fill, black] (0,2) --node {\midarrow}(2,2);
		\draw[fill, black] (2,2) --node {\midarrow}(2,0);
		\draw[fill, black] (2,0) --node {\midarrow}(0,0);
		\draw [fill] [fill] (0,0) circle [radius=0.04];
		\draw[fill] [fill] (2,0) circle [radius=0.04];
		\draw[fill] [fill] (2,2) circle [radius=0.04];
		\draw[fill] [fill] (0,2) circle [radius=0.04];
		
		\node at (-0.4,0) {$x_1$};
		\node at (2.4,0) {$x_4$};
		\node at (2.4,2) {$x_3$};
		\node at (-0.4,1.9) {$x_{2}$}; 
		
		\node at (0,-0.4) {$1$};
		\node at (2,-0.4) {$1$};
		\node at (2,2.4) {$1$};
		\node at (0,2.4) {$w_2 \neq  1$};

		\end{scope}
		\end{tikzpicture}
		\caption{A  weighted naturally oriented even cycle $D$ of length $4$.}\label{fig.1}

	\end{figure}
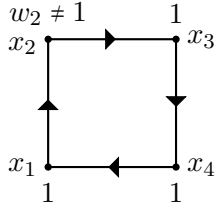	
Since  $N_D^-(x_2)    \cap  V^+(D) = \phi,$  by Remark \ref{s.v.1},  $ V(D) $ is not a
strong vertex cover of $D.$ Let the vertex covers of $D$ except $V(D)$ are $C_1= \{x_1, x_3\}$, $C_2 = \{x_2, x_4\},$ $C_3 = \{x_2, x_3, x_4  \},$ $C_4 =\{x_1, x_2, x_4  \},$ $C_5 =\{x_1, x_3, x_4  \}$ and $C_6 =\{x_1, x_2, x_3  \} .$

Consider  $C_1 =\{ x_1,x_3   \}$.    
Here $ x_2 \in N_D^+(x_1) \cap  C_{1}^c$ and $ x_4 \in N_D^+(x_3) \cap  C_{1}^c$. So $L_1^D(C_1) = \{x_1,x_3\} .$  Note that $C_1$ is minimal and by Lemma \ref{minimal to strong}, $C_1$ is strong.    
Hence $I_{C_{1}} =( x_1,x_3).$        
By the same argument we can prove that  $C_2 $ is strong and   $I_{C_{2}} =(x_2,x_4).$  Consider  $C_3 = \{x_2, x_3, x_4  \}.$
Here $ x_1 \in  N_D^+(x_4) \cap C_{3}^c$,  $ x_3 \notin N_D^+(x_2) \cap C_3^c$ and   $ x_4 \notin N_D^+(x_3) \cap  C_3^c$. So $L_1^D(C_3) = \{x_4\} .$ Here $ x_1 \in N_D^-(x_2) \cap  C_3^c$ and $ x_2 \notin  N_D^-(x_3) \cap   C_3^c.$ Thus $L_2^D(C_3) = \{x_2\}$ and $L_3^D(C_3) = \{x_3\}.$ Since  $ x_2 \in N_D^-(x_3) \cap V^+(D) \cap L_2^D(C_{3}),$    $C_{3}$ is strong. Hence we have $I_{C_{3}} =( x_2^{w_2},x_3^{w_3}, x_{4} )=( x_2^{w_2},x_3, x_{4} )$. Consider  $C_4 = \{x_1, x_2, x_4  \}.$
By Lemma \ref{L3}, $L_3^D(C_4) = \{x_1\}.$ Since  $N_D^-(x_1) = \{x_4\}   \nsubseteq  V^+(D),$ by Remark \ref{s.v.1},  $C_{4}$ is not strong.
By the same argument we can prove that  $C_5 $ and $C_6 $ are not strong.
%
Thus $C_1,C_2$ and $C_3$ are the only strong vertex covers of $D.$ Note that $C_3$ is not a  minimal vertex cover of $ D .$ So $ V(D) $ is neither a
	strong vertex cover of $ D $ nor
	$ D $ has the minimal-strong property.
We claim that    	    
$ I^{(s)} = I^s $ for all $s \geq 2.$

By  Lemma \ref{cooper}, we have     $I^{(s)} =  ((x_2^{w_2},x_3,x_4)\cap(x_2, x_4) )^s \bigcap (x_1, x_3)^s =  (x_2^{w_2},x_2x_3,x_4)^s \cap (x_1, x_3)^s.$ Let $\bar{m} \in \mathcal{G}(I^{(s)} )$. Then $\bar{m} = \lcm(m_1,m_2)$ for some $m_1 \in \mathcal{G}((x_2^{w_2},x_2x_3,x_4)^s  )$ and $m_2 \in \mathcal{G}((x_1, x_3)^s  ).$  
	Thus  $ m_1 =  (x_2^{w_2})^{a_1 } (x_2x_3)^{a_2}(x_4)^{a_3}   $ and  $ m_2 =  (x_1)^{b_1 } (x_3)^{b_2}  $ for some $a_i, b_i \geq 0$ with  $a_1 + a_2 +a_3 = s$ and $b_1 + b_2   = s.$
		
	\textbf{Case (1)} Assume that $b_2 \leq a_2 .$ Then $b_1 \geq a_1 + a_3.$\\
	 Thus $\bar{m} = \lcm(m_1,m_2) = (x_2^{w_2})^{a_1 } (x_2x_3)^{a_2}(x_4)^{a_3} (x_1)^{b_1 }= (x_1x_2^{w_2})^{a_1 } (x_2x_3)^{a_2}(x_4x_1)^{a_3}\\ (x_1)^{b_1 - (a_1 + a_3)}.$
	Hence $\bar{m} \in I^{a_1 +a_2 +a_3} = I^s.$\\
	\textbf{Case (2)} Assume that $b_2 > a_2 .$\\
	Thus $\bar{m} = \lcm(m_1,m_2) = (x_2^{w_2})^{a_1 } (x_2x_3)^{a_2}(x_4)^{a_3}(x_3)^{b_2 -a_2} (x_1)^{b_1 }$.
	Here $b_2 - a_2 + b_1 =  s- a_2 =a_1 + a_3.$ Note that $x_2^{w_2}$ can pair up with $x_1$ to get some element of $\mathcal{G}(I)$. Also $x_2^{w_2}$ can pair up with $x_3$ to get a  multiple of some element of $\mathcal{G}(I)$. Similarly $x_4$ can pair up with $x_3$ or $x_1$ to get some element of   $\mathcal{G}(I)$. Thus   
	$(x_2^{w_2})^{a_1 } (x_4)^{a_3}(x_3)^{b_2 -a_2} (x_1)^{b_1 }$ can be expressed as a multiple of product of $a_1 + a_3$ elements of $\mathcal{G}(I)$.  Therefore $\bar{m} \in I^{ (a_1 + a_3)+a_2} = I^s.$
Hence the claim follows.    
\end{remark}

 In this paper we characterize the weighted naturally oriented even cycles  for the equality of all the ordinary  and symbolic powers of their edge ideals.

\begin{remark}\label{svc}    
	Let $D$ be a weighted oriented  cycle. By Remark \ref{svc1}, $V(D)$ is a strong vertex cover of $D$ if and only if $D$ is naturally oriented and all vertices of $D$  have non-trivial weights.
\end{remark}

By the above remark and Corollary \ref{oddcycle3},
we see that,  
if  $ D $ is a  weighted naturally  oriented  odd   cycle,  then  all vertices of $D$  have non-trivial weights   if and only if    $ I(D)^{(s)} = I(D)^s $ for all  $s \geq   2$.
\begin{lemma}\label{all.weights}
Let $ D $ be a  weighted naturally  oriented   cycle. If all vertices of $D$  have non-trivial weights, then $ {I(D)}^{(s)}={I(D)}^s $ for all $ s \geq 2.$  	
\end{lemma}

\begin{proof}
It follows from  Remark \ref{svc} and Lemma \ref{mandal}.
\end{proof}
%
%
%
%
%


In the next result, if $ D $ is a weighted naturally oriented cycle of length $ n \neq 4 $ where at
least one vertex has non-trivial weight, we see that only the equality of $ 3 $rd ordinary and symbolic
power ensures the non-trivial weight of each vertex.

\begin{theorem}\label{cycle1}
	Let $ D $ be a weighted naturally  oriented   cycle  with underlying graph $G$ is $  C_n=(x_1,x_2,\ldots,x_n) ,$ where $n \neq 4$ and at least one vertex of $D$ has non-trivial weight. Then all vertices of $D$  have non-trivial weights if and only if $ I(D)^{(3)} = I(D)^3 .$

\end{theorem}

\begin{proof}
	
Here $V(D) = \{x_1,\ldots,x_{n} \}$. Let $w_i = w(x_i)$ for $x_i \in V(D).$ If all vertices of $D$  have non-trivial weights, then by Lemma \ref{all.weights}, we have $ {I(D)}^{(3)} = {I(D)}^3.$	

Now we assume that ${I(D)}^{(3)} = {I(D)}^3.$  
Suppose all vertices of $D$ do not  have non-trivial weights. We know that at least one vertex of $D$ has non-trivial weight.
Without loss of generality we can assume that $w(x_2)  \geq 2$ and  $w(x_3)=1.$
	
%
\textbf{Case (1)} Assume that $n \geq 5.$

Then there exists  an induced weighted naturally oriented path $D^{\prime}$ of $D$  with  $V(D^{\prime}) = \{x_{1},x_{2},x_{3},x_{4}\}$, $E(D^{\prime}) = \{(x_{1},x_{2}),(x_{2},x_{3}), (x_{3},x_{4}) \}$, $w(x_{2}) \geq 2$ and $w(x_{3}) = 1$. By Lemma \ref{atmost},  we have   $x_1x_2^{w_2}x_3^2x_4^{w_4} \in  {I(D)}^{(3)} \setminus {I(D)}^3 $, which is a contradiction.

\textbf{Case (2)} Assume that $n =3.$

Note that  $ x_1 \in  V (C_{3}) \setminus N_D^+(V^+(D))$. By 
Proposition \ref{oddcycle},   $x_1^{w_1}x_2^{w_2}x_3 \in {I(D)}^{(2)} \setminus {I(D)}^2$. Then by Lemma \ref{edge} and Lemma \ref{cooper},  we have $(x_1^{w_1}x_2^{w_2}x_3)(x_1x_2^{w_2}) \in {I(D)}^{(3)} \setminus {I(D)}^3$, which is a contradiction.    
\end{proof}

\begin{remark}\label{C6.}
	Let $ D $ be a weighted naturally oriented cycle of length $n$  with underlying graph $G$ is $  C_n=(x_1,x_2,\ldots,x_n) ,$ where $ n \neq 4,6 $ and at least one vertex of $D$ has non-trivial weight. If we assume $w(x_2)  \geq 2$ and  $w(x_3)=1$,
	 then by the similar argument as in Theorem \ref{cycle1}, we find that $x_1x_2^{w_2}x_3x_6^{w_6} \in I(D)^{(2)} \setminus I(D)^2$  for $ n \geq  7 $ and $x_1^{w_1}x_2^{w_2}x_3 \in I(D)^{(2)} \setminus I(D)^2$  for $ n = 3 $
	and $ 5 $. Hence $I(D)^{(2)} = I(D)^2$ implies all vertices of $ D $ have non-trivial weights and it ensures the equality of all the
 ordinary and symbolic powers. But it is not true for weighted naturally oriented
	even cycles of length $ 6. $ For example consider $ D $ to be a weighted naturally oriented cycle $ D $ where the
	underlying graph is $  C_6 = (x_1, x_2, \ldots, x_6) $ and only  $ x_2 $ has  non-trivial
	weight $2$. Then $ I(D) = (x_1x_2^{2}, x_2x_3, x_3x_4, x_4x_5, x_5x_6, x_6x_1).$ Using
	Macaulay 2, we observe $I(D)^{(2)} = I(D)^2$ but $I(D)^{(3)} \neq I(D)^3$ and in this case all the
	vertices except $ x_2 $ have trivial weights. In Theorem \ref{cycle1}, $I(D)^{(3)} = I(D)^3$ guarantees that all vertices of $ D $ have non-trivial weights and it ensures the equality of all the
 ordinary and symbolic powers. But if $ D $ is a weighted naturally oriented cycle  with
	underlying graph is  $ C_4 = (x_1, x_2, x_3, x_4) $ and at least one vertex of $ D $ has non-trivial
	weight,  we do not even need  each vertex to be of
	non-trivial weight to ensure the equality of all the
 ordinary and symbolic powers (see Proposition \ref{evencycle2}).            	
\end{remark}

In the next two results, we give necessary and sufficient condition for the equality of  ordinary and symbolic powers of   edge ideals of  weighted naturally oriented even cycles.

\begin{corollary}\label{evencycle1}
	Let $ D $ be a weighted naturally oriented even cycle   with underlying graph  $C_{n} = (x_1,\ldots,x_{n}),$  where $n \neq 4$ and at least one vertex of $D$ has non-trivial weight. Then   $ I(D)^{(s)} = I(D)^s $ for all  $s \geq   2$  if and only if  all vertices of $D$  have non-trivial weights.

\end{corollary} 

\begin{proof}
If $ I(D)^{(s)} = I(D)^s $ for all  $s \geq   2$,  then by Theorem \ref{cycle1},  all vertices of $D$  have non-trivial weights.	
If  all vertices of $D$  have non-trivial weights,	
  then by Lemma \ref{all.weights},  we have $ I(D)^{(s)} = I(D)^s $ for all  $s \geq   2$ 
%
\end{proof}

In the next result, we characterize the  weighted naturally oriented even cycles of length $ 4 $ for the equality of ordinary and symbolic powers of their edge
ideals.
	\begin{proposition}\label{evencycle2}
	Let $ D $ be a weighted naturally oriented even cycle  with underlying graph  $C_{4} = (x_1,x_2,x_3,x_{4})$  and at least one vertex of $D$ has non-trivial weight. Then $ I(D)^{(s)} = I(D)^s $ for all $s \geq 2$   if and only if $D$ satisfies one of the following conditions:
	\begin{enumerate}
		\item	all vertices of $D$  have non-trivial weights,
		  	  
		\item one vertex of $D$  has non-trivial weight,

		\item only two non-consecutive vertices of $D$  have non-trivial weights.

	\end{enumerate}

\end{proposition}

\begin{proof}
Let $I = I(D)$.	If  $D$  satisfies $ (1), $ then by Lemma \ref{all.weights}, we have  $ I^{(s)} = I^s $ for all  $s \geq   2$.
	If  $D$  satisfies $ (2), $ then by Remark \ref{one.weight}, we get  $ I^{(s)} = I^s $ for all  $s \geq   2$. Now assume  $D$  satisfies $ (3). $ Here two non-consecutive vertices of $D$  have non-trivial weights. Without loss of generality we can assume that $w(x_2) \neq 1$ and $w(x_4) \neq 1.$ Then $I=(x_1x_2^{w_2},x_2x_3,x_3x_4^{w_4},x_4x_1)$ and by the similar argument as in Remark \ref{one.weight}, we find	
$I^{(s)} = (x_1,x_3 )^s \bigcap  ((x_2^{w_2},x_3,x_4)\cap(x_2, x_4) )^s \bigcap ((x_1,x_2,x_4^{w_4})\cap(x_2, x_4) )^s = (x_1,x_3 )^s \cap  (x_2^{w_2},x_2x_3,x_4)^s \cap (x_2, x_1x_4, x_4^{w_4})^s.$  We claim  that $I^{(s)}   \subseteq I^s.$\\
	We prove this by induction on $s.$ The case for $s=1$ is trivial. Let $\bar{m} \in \mathcal{G}(I^{(s)} )$. Then $\bar{m} = \lcm(m_1,m_2,m_3)$ for some $m_1 \in \mathcal{G}((x_1,x_3)^s  )$, $m_2 \in \mathcal{G}((x_2^{w_2},x_2x_3,x_4)^s  )$ and $m_3 \in \mathcal{G}((x_2, x_1x_4, x_4^{w_4})^s  ).$ Thus  $ m_1 =  (x_1)^{a_1 } (x_3)^{a_2}   $, $ m_2 =  (x_2^{w_2})^{b_1 } (x_2x_3)^{b_2}(x_4)^{b_3}   $ and  $ m_3 =  (x_2)^{c_1 } (x_1x_4)^{c_2}(x_4^{w_4})^{c_3}  $ for some $a_i, b_i,c_i \geq 0$ with  $a_1 + a_2  = s,$ $b_1 + b_2 + b_3 = s$ and    $c_1 + c_2 +c_3  = s.$  
	\vspace*{0.1cm}\\	
	\textbf{Case (1)} Assume that $a_1 \neq 0.$
	
If $b_3 \neq 0,$ then $\bar{m}$ is divisible by $x_1x_4$ and observe that $\frac{\bar{m}}{x_1x_4}$ $ \in  (x_1,x_3 )^{s-1} \cap  (x_2^{w_2},x_2x_3,x_4)^{s-1}\\ \cap (x_2, x_1x_4, x_4^{w_4})^{s-1}=I^{(s-1)}.$ Hence by induction hypothesis	$\frac{\bar{m}}{x_1x_4}$ $ \in  I^{s-1}$ and so $\bar{m} \in I^s.$

If $b_2 \neq 0,$ then $\bar{m}$ is divisible by $x_2x_3$ and notice that $\frac{\bar{m}}{x_2x_3}$ $ \in  (x_1,x_3 )^{s-1} \cap  (x_2^{w_2},x_2x_3,x_4)^{s-1} \cap (x_2, x_1x_4, x_4^{w_4})^{s-1}=I^{(s-1)}.$ Hence by induction hypothesis	$\frac{\bar{m}}{x_2x_3}$ $ \in  I^{s-1}$ and so $\bar{m} \in I^s.$

Now we assume $b_3=b_2=0.$ Then $b_1=s$ and  $\lcm(m_1,m_2) \in I^s.$ So $\bar{m} \in I^s.$ 
	
\textbf{Case (2)} Assume that $a_1 = 0.$
Then $a_2=s$ and  $\lcm(m_1,m_3) \in I^s.$ Hence $\bar{m} \in I^s.$
\vspace*{0.1cm}\\
Next we prove the converse part. Let us assume that $ I^{(s)} = I^s $ for all  $s \geq   2$. Suppose none of $ (1), $ $ (2)  $ and $(3)$ is true. Then $D$ must satisfy one of the following conditions:

\item [(a)]	only two consecutive vertices of $D$  have non-trivial weights,
	
\item [(b)] only three  vertices of $D$  have non-trivial weights.
	
\item [(a)]	Assume that $D$ has only two consecutive vertices with non-trivial weights. Without loss of generality we can assume that $w(x_2) \neq 1$ and $w(x_3) \neq 1.$ Then $I=(x_1x_2^{w_2},x_2x_3^{w_3},\\x_3x_4,x_4x_1)$ and by the similar argument as in Remark \ref{one.weight}, we find that	
$I^{(3)} =  ((x_1,x_3^{w_3},x_4)\\\cap(x_1, x_3) )^3  \bigcap ((x_2^{w_2},x_3^{w_3},x_4)\cap(x_2, x_4) )^3 =  (x_1,x_3^{w_3},x_3x_4)^3 \cap (x_2^{w_2},x_2x_3^{w_3},x_4)^3.$ Observe that $x_1x_2x_3^{w_3}x_4^2 \in I^{(3)} \setminus I^3$, which is a contradiction.  

\item [(b)]	Assume that $D$ has only three  vertices with non-trivial weights. Without loss of generality we can assume that $w(x_2) \neq 1,$ $w(x_3) \neq 1$ and $w(x_4) \neq 1.$ Then $I=(x_1x_2^{w_2},x_2x_3^{w_3},x_3x_4^{w_4},\\x_4x_1)$ and by the similar argument as in Remark \ref{one.weight}, we find that	
$I^{(2)} =  ((x_1,x_3^{w_3},x_4^{w_4})\cap(x_1, x_3) )^2 \bigcap ((x_1,x_2,x_4^{w_4})\cap(x_2, x_4) )^2 \bigcap ((x_2^{w_2},x_3^{w_3},x_4)\cap(x_2, x_4) )^2 =  (x_1,x_3^{w_3},x_3x_4^{w_4})^2 \cap (x_2,x_1x_4,x_4^{w_4})^2 \cap  (x_2^{w_2},x_2x_3^{w_3},x_4)^2.$ Notice that $x_1x_2x_3x_4^{w_4} \in I^{(2)} \setminus I^2$, which is a contradiction. Hence the proof follows.    
\end{proof}

\section{Comparing symbolic powers  of weighted oriented graphs}


In this section, we show that the symbolic powers of edge ideals of a weighted oriented graph $ D $ and the new weighted oriented graph $ D^{\prime} $ obtained from $D$ after replacing the weights of vertices with non-trivial weights which are sink, by trivial
weights, behave in a similar way. We see that using the symbolic powers of edge ideals of one class of weighted oriented graphs, we can compute the symbolic powers of edge ideals of another class of weighted oriented graphs.

\begin{notation} \label{phi.}    
	Let $ D $ be a  weighted oriented graph,  where  $ U \subseteq V^{+}(D)  $ be the set of vertices  which are sinks and $ w_j =w(x_j) $  if $ x_j \in V^{+}(D) .$ Let $ D^{\prime} $ be the  weighted oriented graph obtained from  $D$ after replacing  $ w_j $ by $ w_j = 1  $  if $ x_j \in U .$ 	Let $ V(D) = V(D^{\prime}) = V = \{x_1,\ldots,x_n\}. $
	Let $R=k[x_1,\ldots,x_n]=$  $\displaystyle{{\bigoplus_{d=0}^{\infty}}R_d}$ be the standard graded polynomial ring. Consider the map
	\begin{align*}
	\Phi : R \longrightarrow R ~ \mbox{where}  ~ x_j \longrightarrow x_j ~ \mbox{if} ~ x_j \notin U  \mbox{and} ~ x_j \longrightarrow x_j^{w_j}~  \mbox{if}  ~ x_j \in U .
	\end{align*}
\end{notation}

Here $ \Phi $ is an injective homomorphism of $ k- $algebras.
By \cite[Corollary 5]{gimenez}, $I(D)$ is Cohen–
Macaulay if and only if $I(D^{\prime})$ is Cohen–Macaulay.
We want to investigate the relationship between the symbolic powers of $I(D)$ and $I(D^{\prime}).$
The next two lemma's are very important to prove our result in Theorem \ref{sym.theorem.1}.    

\begin{lemma}\label{strong}
	Let $D,$ $D^{\prime}$ and $\Phi$ are same as defined in Notation \ref{phi.}. Then $C$ is a strong vertex cover of $D$  if and only if $C$ is a strong vertex cover of $D^{\prime}$.  
\end{lemma}

\begin{proof}
	Since $D$ and $D^{\prime}$ have the same underlying graph, 
	$C$ is a vertex cover of $D$ 	if and only if $C$ is a vertex cover of $D^{\prime}$. Now consider $C$ to be a  vertex cover of both $D$ and $D^{\prime}.$
	Here $D$ and $D^{\prime}$ have the same orientation on edges. Thus 
	$L_i^D(C)$	$=$ $L_i^{D^{\prime}}(C)$  for $1 \leq i \leq 3$. This implies $C \setminus L_1^D(C)$  	$=$ $C \setminus L_1^{D^{\prime}}(C)$.
	Since each element of $U$ is sink, $N_{D}^-(x) \cap U =  \phi$ for $x \in V \setminus U.$ Since two adjacent vertices can not be sink vertices, $N_{D}^-(x) \cap U =  \phi$ for $x \in  U.$  Hence for each $ x \in C  $,    $N_{D}^-(x) \cap U =  \phi$.        
Note that $V^+(D)\setminus  U =     V^+(D^{\prime}).$ Since $D$ and $D^{\prime}$ have the same orientation on edges, $N_{D}^-(x)  =  N_{D^{\prime}}^-(x)$ for each $x \in C.$
	Thus for each $ x \in C  $, $N_{D}^-(x) \cap V^+(D) =N_{D}^-(x) \cap [V^+(D)\setminus  U] =  N_{D^{\prime}}^-(x) \cap [V^+(D)\setminus  U] =  N_{D^{\prime}}^-(x) \cap  V^+(D^{\prime}).$
Therefore for each $ x \in $ $L_3^D(C) =$  $L_3^{D^{\prime}}(C),$  we have  $N_{D}^-(x) \cap V^+(D)\cap [C\setminus {L_1^D{(C)}} ] =  N_{D^{\prime}}^-(x) \cap  V^+(D^{\prime})\cap [C\setminus {L_1^{D^{\prime}}{(C)}} ]$ and  $N_{D}^-(x) \cap V^+(D)\cap [C\setminus {L_1^{D}{(C)}} ] \neq \phi $ if and only if $  N_{D^{\prime}}^-(x) \cap  V^+(D^{\prime})\cap [C\setminus {L_1^{D^{\prime}}{(C)}} ]\neq \phi$.  
	Hence by Remark \ref{s.v.1}, $C$ is a strong vertex cover of $D$ 	if and only if $C$ is a strong vertex cover of $D^{\prime}$.  
\end{proof}

\begin{lemma}\label{intersections}
	Let $D,$ $D^{\prime}$ and $\Phi$ are same as defined in Notation \ref{phi.}. 
	Let   ${I}$ and $\tilde{I}$ be the edge ideals of   $D$ and $D^{\prime},$  respectively.  Let $C_{1_1},\ldots,C_{{r}_1}$ are the maximal strong vertex covers of both $ D $ and $ D^{\prime}.$         
	Let $C_{i_2},\ldots,C_{i_{t_i}}$ are the strong vertex covers of both $ D $ and $ D^{\prime} $ such that $C_{i_j}  \subset C_{i_1} $ for $2 \leq j \leq t_i$  and $1 \leq i \leq r.$ Let  ${I}_{C_{i_j}}$  and  $\tilde{I}_{C_{k_l}}     $ are the irreducible ideals associated to $ C_{i_j} $ and $C_{k_l} ,$ respectively. Then
	$$\Phi(\tilde{I}_{C_{i_1}}  \cap \tilde{I}_{C_{i_2}} \cap   \cdots \cap  \tilde{I}_{C_{i_{t_i}}}    ) = {I}_{C_{i_1}}  \cap {I}_{C_{i_2}} \cap   \cdots \cap  {I}_{C_{i_{t_i}}}   ~\mbox{for} ~ 1 \leq i \leq r.$$
	
	Moreover, every element of $ \mathcal{G}( ({I}_{C_{1_1}}  \cap {I}_{C_{1_2}} \cap   \cdots \cap  {I}_{C_{1_{t_1}}})^s       )$ is of the form $$\displaystyle{ (\prod_{x_j} x_j^{d_{j_1}} | x_j \in V \setminus U)(\prod_{x_k} (x_k^{w_k})^{e_{k_1}} | x_k \in  U)} \mbox{~for some    $d_{j_1},e_{k_1} \geq  0.$}$$
\end{lemma}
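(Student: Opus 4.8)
The plan is to prove the two assertions separately, in the order they are stated. For the first assertion, the key observation is that the irreducible ideal $I_{C_{i_j}}$ of $D^{\prime}$ and the irreducible ideal $\tilde I_{C_{i_j}}$ of $D$ attached to the \emph{same} strong vertex cover $C_{i_j}$ differ only in the exponents of the generators coming from vertices in $U$: by the previous lemma the sets $L_1, L_2, L_3$ agree for $D$ and $D^{\prime}$, so $I_{C_{i_j}}$ is generated by $L_1^D(C_{i_j})$ together with $x_\ell^{w_\ell(D^{\prime})}$ for $x_\ell \in L_2^D(C_{i_j})\cup L_3^D(C_{i_j})$, and since the weight of a sink $x_\ell \in U$ is reset to $1$ in $D^{\prime}$, exactly those generators $x_\ell$ (with $x_\ell\in U$) appear to the first power in $I_{C_{i_j}}$ but to the power $w_\ell$ in $\tilde I_{C_{i_j}}$. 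The map $\Phi$ is precisely designed to undo this: $\Phi(x_\ell) = x_\ell^{w_\ell}$ for $x_\ell\in U$ and $\Phi(x_j)=x_j$ otherwise. First I would record that $\Phi$ is a (flat, injective) monomial $k$-algebra homomorphism and that for a monomial ideal $J$ generated by monomials $m_1,\dots,m_t$, the extension $J^e=\Phi(J)R$ is generated by $\Phi(m_1),\dots,\Phi(m_t)$. Applying this generator-wise gives $\Phi(I_{C_{i_j}})R=\tilde I_{C_{i_j}}$ for each $j$.

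The slightly delicate point is to pass from individual ideals to their intersection, i.e. to show $\Phi(\bigcap_j I_{C_{i_j}})R = \bigcap_j \Phi(I_{C_{i_j}})R$. The inclusion ``$\subseteq$'' is automatic (Lemma \ref{atiyah}(a) applied iteratively). For ``$\supseteq$'', I would argue with monomials: an element of $\bigcap_j \tilde I_{C_{i_j}}$ is, since all ideals involved are monomial, generated by monomials, so it suffices to take a monomial $m$ lying in every $\tilde I_{C_{i_j}}$ and show $m\in\Phi(\bigcap_j I_{C_{i_j}})R$. Here I would use the structure of the generators of $\tilde I_{C_{i_j}}$: every generator is either $x_p$ with $x_p\notin U$ (since vertices of $U$, being sinks, never lie in $L_1$) or $x_q^{w_q}$; membership of $m$ in $\tilde I_{C_{i_j}}$ means $m$ is divisible by one such generator. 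Writing $m = \Phi(m')$ where $m'$ is obtained from $m$ by replacing each block $x_q^{w_q}$ ($x_q\in U$) by $x_q^{\lfloor \cdot\rfloor}$ — more carefully, since every exponent of an $x_q$ with $x_q\in U$ appearing in $m$ is a multiple of $w_q$ (this is the crux: a sink variable enters $\tilde I_{C_{i_j}}$ only through the generator $x_q^{w_q}$, and any monomial in the intersection is divisible, for that variable, only by powers of $x_q^{w_q}$), one checks $m'\in I_{C_{i_j}}$ for every $j$, hence $m'\in\bigcap_j I_{C_{i_j}}$ and $m=\Phi(m')\in\Phi(\bigcap_j I_{C_{i_j}})$. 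I expect \textbf{this divisibility/lifting step to be the main obstacle}: one must verify carefully that the $U$-exponents of any monomial in $\bigcap_j \tilde I_{C_{i_j}}$ are forced to be multiples of the corresponding $w_q$, using that sinks are not adjacent to sinks and do not appear in any $L_1$.

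For the ``Moreover'' clause, I would first use Lemma \ref{atiyah}(b) to get $\Phi\big((I_{C_{1_1}}\cap\cdots\cap I_{C_{1_{t_1}}})^s\big)R = \big(\Phi(I_{C_{1_1}}\cap\cdots\cap I_{C_{1_{t_1}}})R\big)^s = (\tilde I_{C_{1_1}}\cap\cdots\cap\tilde I_{C_{1_{t_1}}})^s$, combining it with the first assertion. Then, since $\Phi$ sends each variable $x_j$ ($x_j\notin U$) to $x_j$ and each $x_k$ ($x_k\in U$) to $x_k^{w_k}$, the image of any monomial is a product of $x_j$'s (for $x_j\notin U$) and $x_k^{w_k}$'s (for $x_k\in U$); so every monomial generator of $\Phi(J)$, for any monomial ideal $J$, automatically has the stated shape $\big(\prod_{x_j\in V\setminus U} x_j^{d_{j_1}}\big)\big(\prod_{x_k\in U}(x_k^{w_k})^{e_{k_1}}\big)$. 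Applying this to $J = (I_{C_{1_1}}\cap\cdots\cap I_{C_{1_{t_1}}})^s$ and invoking the equality just established yields that every element of $\mathcal{G}\big((\tilde I_{C_{1_1}}\cap\cdots\cap\tilde I_{C_{1_{t_1}}})^s\big)$ has the required form, completing the proof.
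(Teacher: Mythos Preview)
Your overall strategy matches the paper's: establish $\Phi(I_{C_{i_j}})=\tilde I_{C_{i_j}}$ generator-by-generator (using that sinks never lie in $L_1$), invoke Lemma~\ref{atiyah}(a) for ``$\subseteq$'', and then lift a monomial on the $\tilde I$-side back through $\Phi$ for ``$\supseteq$''. The one genuine wobble is your parenthetical ``crux'': the assertion that \emph{any} monomial $m$ in $\bigcap_j \tilde I_{C_{i_j}}$ has each $U$-exponent a multiple of $w_q$ is false (e.g.\ $x_q^{w_q+1}$ lies in any ideal containing $x_q^{w_q}$), so you cannot write $m=\Phi(m')$ exactly. The fix is exactly what the paper does: restrict to $f\in\mathcal G(\bigcap_j \tilde I_{C_{i_j}})$ and use that such an $f$ is an $\lcm$ of single-variable generators $f_i\in\mathcal G(\tilde I_{C_{i_j}})$; since each $f_i$ is either some $x_p$ with $x_p\notin U$ or some $x_\ell^{w_\ell}$, the $U$-exponent of $f$ is forced to be $0$ or $w_q$. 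Then $f=\Phi(g)$ on the nose for the obvious $g=\lcm(g_1,\dots,g_{t_1})$, and the reverse inclusion follows. (Alternatively, your floor construction works if you only claim $\Phi(m')\mid m$ rather than $m=\Phi(m')$, but the minimal-generator route is cleaner.)

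For the ``Moreover'' clause your deduction is correct and in fact slightly slicker than the paper's: the paper argues directly by writing $h\in\mathcal G\big((\bigcap_j\tilde I_{C_{1_j}})^s\big)$ as a product of $s$ minimal generators of $\bigcap_j\tilde I_{C_{1_j}}$ and invoking the shape~(\ref{f}) established above, whereas you combine the first assertion with Lemma~\ref{atiyah}(b) to identify $(\bigcap_j\tilde I_{C_{1_j}})^s$ with $\Phi\big((\bigcap_j I_{C_{1_j}})^s\big)$ and read off the shape from the definition of $\Phi$. Both are valid; yours avoids repeating the exponent bookkeeping.
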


\begin{proof}
	First we claim that  $       \Phi(\tilde{I}_{C_{1_1}}  \cap \tilde{I}_{C_{1_2}} \cap   \cdots \cap  \tilde{I}_{C_{1_{t_1}}}) =   {I}_{C_{1_1}}  \cap {I}_{C_{1_2}} \cap   \cdots \cap  {I}_{C_{1_{t_1}}}.    $	
	Consider any strong  vertex cover $C_{1_i}$ of both $D$ and $D^{\prime}.$ From the proof of Lemma \ref{strong},
  $L_p^D(C_{1_i})$	$=$ $L_p^{D^{\prime}}(C_{1_i})$  for $1 \leq p \leq 3$. Notice that  $N_{D}^+(x) = N_{D^{\prime}}^+(x)  = \phi $ for each $x \in U$. So   $L_1^{D}(C_{1_i}) \cap U = L_1^{D^{\prime}}(C_{1_i}) \cap U = \phi$. This implies  $ U \subseteq  L_2^{D}(C_{1_i}) \cup L_3^{D}(C_{1_i}) = L_2^{D^\prime}(C_{1_i}) \cup L_3^{D^\prime}(C_{1_i})  $. By definition   
	$ \tilde{I}_{C_{1_i}} = ( L_1^{D^\prime}(C_{1_i}) \cup \{x_j^{w_j}~|~x_j \in [L_2^{D^\prime}(C_{1_i}) \cup L_3^{D^\prime}(C_{1_i})] \setminus U )\} \cup \{x_k~|~x_k \in  U )\} ) $ and $ {I}_{C_{1_i}} = ( L_1^{D}(C_{1_i}) \cup \{x_j^{w_j}~|~x_j \in [L_2^{D}(C_{1_i}) \cup L_3^{D} (C_{1_i})] \setminus U )\} \cup \{x_k^{w_k}~|~x_k \in  U )\} ) $.	 
	Note that $ \Phi(\tilde{I}_{C_{1_i}})  = {I}_{C_{1_i}} $ for $1 \leq i \leq t_1.$	 By Lemma \ref{atiyah},  we have $\Phi(\tilde{I}_{C_{1_1}}  \cap \tilde{I}_{C_{1_2}} \cap   \cdots \cap  \tilde{I}_{C_{1_{t_1}}}    ) \subseteq \Phi(\tilde{I}_{C_{1_1}})  \cap \Phi(\tilde{I}_{C_{1_2}}) \cap   \cdots \cap  \Phi(\tilde{I}_{C_{1_{t_1}}}    ) = {I}_{C_{1_1}}  \cap {I}_{C_{1_2}} \cap   \cdots \cap  {I}_{C_{1_{t_1}}} $.
\vspace*{0.2cm}\\	
	Let $ f \in \mathcal{G}({I}_{C_{1_1}}  \cap {I}_{C_{1_2}} \cap   \cdots \cap  {I}_{C_{1_{t_1}}}).$ Then 
	$f= \lcm (f_1,f_2,\ldots,f_{t_1})$ for some  $f_i \in \mathcal{G}({I}_{C_{1_i}}  )$ where $1 \leq i \leq t_1.$
	Here  each $ f_i $ involves only one variable. Fix any $i \in [t_1]$. If $f_i$ involves the variable $x_l,$ then 
	 \begin{equation*}  
	f_i= 
	\begin{cases}
	&x_l 
	\vspace*{0.2cm}~\mbox{if}~  x_l \in  L_1^{D}(C_{1_i}) \\
	&x_l^{w_l}   ~\mbox{if}~  x_l \in [L_2^{D}(C_{1_i}) \cup L_3^{D} (C_{1_i})] \setminus U\vspace*{0.2cm}\\
	&x_l^{w_l}  ~\mbox{if}~  x_l \in  U       
	\end{cases}
	\end{equation*}

Thus we can express  $f$ as
\begin{equation}\label{f}
f = \displaystyle{(\prod_{x_j} (x_j^{a_j})^{b_{j}} | x_j \in V \setminus U)(\prod_{x_k} (x_k^{w_k})^{c_{k}} | x_k \in  U)}
\end{equation}	  
 for some  $a_{j} = 1 $ or $w_{j},$ $b_{j} = 0 $ or $1$ and $c_k = 0 $ or $1.$ We want to find some $g \in \tilde{I}_{C_{1_1}}  \cap \tilde{I}_{C_{1_2}} \cap   \cdots \cap  \tilde{I}_{C_{1_{t_1}}} $ such that $\Phi(g) = f.$ We set $g_i=f_i$ if 
  $f_i$ involves variable from $V\setminus  U$ and $g_i=x_l$ if  
  $f_i = x_l^{w_l}$ where  $x_l \in   U$ for $1 \leq i \leq t_1.$ 
Therefore
\begin{equation*}  
g_i= 
\begin{cases}
&x_l 
\vspace*{0.2cm}~\mbox{if}~  x_l \in  L_1^{D^\prime}(C_{1_i}) \\
&x_l^{w_l}   ~\mbox{if}~  x_l \in [L_2^{D^\prime}(C_{1_i}) \cup L_3^{D^\prime} (C_{1_i})] \setminus U\vspace*{0.2cm}\\
&x_l  ~\mbox{if}~  x_l \in  U       
\end{cases}
\end{equation*}	  
	  
Here each $g_i \in \mathcal{G}({\tilde{I}}_{C_{1_i}}  )$ and	$\displaystyle{ \lcm (g_1,g_2,\ldots,g_{t_1}) = (\prod_{x_j} (x_j^{a_j})^{b_{j}} | x_j \in V \setminus U)(\prod_{x_k} (x_k)^{c_{k}} }| x_k \in  U).$  Let $g=\lcm (g_1,g_2,\ldots,g_{t_1}).$ Notice that $\Phi(g) = f.$   Thus $f = \Phi(g)= \Phi(\lcm (g_1,g_2,\ldots,g_{t_1}))$$ \in \Phi(\tilde{I}_{C_{1_1}}  \cap \tilde{I}_{C_{1_2}} \cap   \cdots \cap  \tilde{I}_{C_{1_{t_1}}}    ) .$ Hence $       \Phi(\tilde{I}_{C_{1_1}}  \cap \tilde{I}_{C_{1_2}} \cap   \cdots \cap  \tilde{I}_{C_{1_{t_1}}}    ) =   {I}_{C_{1_1}}  \cap {I}_{C_{1_2}} \cap   \cdots \cap  {I}_{C_{1_{t_1}}}    $.
	By the similar argument, for $2 \leq i \leq r,$  we can prove that
$$       \Phi(\tilde{I}_{C_{i_1}}  \cap \tilde{I}_{C_{i_2}} \cap   \cdots \cap  \tilde{I}_{C_{i_{t_i}}}    ) = {I}_{C_{i_1}}  \cap {I}_{C_{i_2}} \cap   \cdots \cap  {I}_{C_{i_{t_i}}}.          $$

Let $h \in \mathcal{G}( ({I}_{C_{1_1}}  \cap {I}_{C_{1_2}} \cap   \cdots \cap  {I}_{C_{1_{t_1}}})^s       ).$ Then $h = h_1\cdots h_s$ for some $h_i$'s  $ \in   \mathcal{G}({I}_{C_{1_1}}  \cap {I}_{C_{1_2}} \cap   \cdots \cap  {I}_{C_{1_{t_1}}}).$
By (\ref{f}),	for $1 \leq i \leq s,$ $\displaystyle{h_i = (\prod_{x_j} (x_j^{a_{j_i}})^{b_{j_i}} | x_j \in V \setminus U)(\prod_{x_k} (x_k^{w_k})^{c_{k_i}} | x_k \in  U)}$\\
		\hspace*{6cm}    for some  $a_{j_i} = 1 $ or $w_{j},$ $b_{j_i} = 0 $ or $1$ and  $c_{k_i} = 0 $ or $1.$

Hence $\displaystyle{h = (\prod_{x_j} x_j^{d_{j_1}} | x_j \in V \setminus U)(\prod_{x_k} (x_k^{w_k})^{e_{k_1}} | x_k \in  U)}$ where $d_{j_1} = {a_{j_1}}{b_{j_1}} + \cdots + {a_{j_s}}{b_{j_s}} $ and $e_{k_1} = {c_{k_1}} + \cdots + {c_{k_s}}.$  
\end{proof}

In the following theorem, we show that the symbolic powers of edge ideals of $D$ and $D^{\prime}$ behave in a similar way.


\begin{theorem}\label{sym.theorem.1}
	Let   ${I}$ and $\tilde{I}$ be the edge ideals of   $D$ and $D^{\prime},$  respectively. Then $ \Phi(\tilde{I}^s)  = {I}^s   $ and $  \Phi(\tilde{I}^{(s)})  = {I}^{(s)}$ for all $ s \geq 1.$
	
\end{theorem}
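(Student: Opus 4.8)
The plan is to prove the two statements $\Phi(I^s)=\tilde I^s$ and $\Phi(I^{(s)})=\tilde I^{(s)}$ separately, using the multiplicativity of $\Phi$ on monomials together with the structural results assembled in Lemma \ref{intersections} and Lemma \ref{cooper}. For the first equality I would argue directly from generators: since $I=I(D^{\prime})$ is generated by the monomials $x_ix_j^{w_j}$ coming from the edges of $D^{\prime}$, and $\tilde I=I(D)$ by the monomials $x_i x_j^{w_j}$ coming from the edges of $D$, one checks on each edge $(x_i,x_j)$ that $\Phi(x_i x_j^{w_j})$ equals the corresponding generator of $\tilde I$. There are essentially three cases for a generator $x_i x_j^{w_j}$ of $I$: the sink vertex $x_j$ may or may not lie in $U$, and $x_i$ cannot lie in $U$ (two adjacent vertices cannot both be sinks, and anyway $x_i$ has an arrow leaving it). If $x_j\notin U$ then $\Phi$ fixes both variables; if $x_j\in U$ then in $D^{\prime}$ the weight of $x_j$ was reset to $1$, so the generator of $I$ is $x_i x_j$, and $\Phi(x_i x_j)=x_i x_j^{w_j}$, which is exactly the generator of $\tilde I$ on that edge. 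Hence $\Phi$ maps $\mathcal{G}(I)$ bijectively onto $\mathcal{G}(\tilde I)$, and because $\Phi$ is a multiplicative ($k$-algebra) homomorphism sending monomials to monomials, $\Phi(I^s)=\tilde I^s$ follows for all $s\geq 1$; alternatively one can simply invoke Lemma \ref{atiyah}(b) inductively to get $\Phi(I^s)=\Phi(I)^s\cdot(\text{extension})$ and identify $\Phi(I)R=\tilde I$.

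For the symbolic power equality I would use the explicit description in Lemma \ref{cooper}: writing $C_{1_1},\dots,C_{r_1}$ for the maximal strong vertex covers (which by the preceding lemma are the same for $D$ and $D^{\prime}$) and $C_{i_2},\dots,C_{i_{t_i}}$ for the strong vertex covers contained in $C_{i_1}$, we have
$$I^{(s)}=\bigcap_{i=1}^{r}\bigl(I_{C_{i_1}}\cap I_{C_{i_2}}\cap\cdots\cap I_{C_{i_{t_i}}}\bigr)^s,\qquad \tilde I^{(s)}=\bigcap_{i=1}^{r}\bigl(\tilde I_{C_{i_1}}\cap\tilde I_{C_{i_2}}\cap\cdots\cap\tilde I_{C_{i_{t_i}}}\bigr)^s.$$
By Lemma \ref{intersections} we already know $\Phi(I_{C_{i_1}}\cap\cdots\cap I_{C_{i_{t_i}}})=\tilde I_{C_{i_1}}\cap\cdots\cap\tilde I_{C_{i_{t_i}}}$ for each $i$, and again by Lemma \ref{atiyah}(b) (applied to the extension ideal generated by $\Phi$) the $s$-th power passes through: $\Phi\bigl((I_{C_{i_1}}\cap\cdots\cap I_{C_{i_{t_i}}})^s\bigr)R = \bigl(\tilde I_{C_{i_1}}\cap\cdots\cap\tilde I_{C_{i_{t_i}}}\bigr)^s$. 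So the one genuinely nontrivial point is to show that $\Phi$ \emph{commutes with the intersection} $\bigcap_{i=1}^{r}$, i.e. that $\Phi\bigl(\bigcap_i A_i\bigr)=\bigcap_i \Phi(A_i)$ where $A_i=(I_{C_{i_1}}\cap\cdots\cap I_{C_{i_{t_i}}})^s$. The inclusion $\subseteq$ is immediate from Lemma \ref{atiyah}(a). For $\supseteq$ I would exploit the last assertion of Lemma \ref{intersections}: every minimal generator of $\tilde A_i=(\tilde I_{C_{i_1}}\cap\cdots)^s$ is a monomial of the shape $\bigl(\prod_{x_j\notin U}x_j^{d_j}\bigr)\bigl(\prod_{x_k\in U}(x_k^{w_k})^{e_k}\bigr)$ — that is, a monomial lying in the image of $\Phi$ and whose $\Phi$-preimage is $\prod_{x_j\notin U}x_j^{d_j}\prod_{x_k\in U}x_k^{e_k}$. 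Given a monomial $h\in\bigcap_i\tilde A_i$, I would take for each $i$ a generator $h^{(i)}$ of $\tilde A_i$ dividing $h$; since each $\Phi^{-1}(h^{(i)})$ is a well-defined monomial and divisibility in this "$U$-block-structured" monoid is detected blockwise, $\Phi^{-1}$ makes sense on $h$ and on each $h^{(i)}$ compatibly, giving a monomial $g$ with $\Phi(g)=h$ and $g\in A_i$ for all $i$, hence $g\in\bigcap_i A_i$ and $h\in\Phi(\bigcap_i A_i)$.

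The main obstacle, and the step I would spend the most care on, is precisely this commutation of $\Phi$ with the finite intersection: $\Phi$ is not surjective, so one cannot argue abstractly, and one must verify that the monomials appearing are all "saturated in the $U$-variables" in the sense that each $x_k\in U$ occurs only with exponents that are multiples of $w_k$ — this is exactly what the second half of Lemma \ref{intersections} guarantees, and it is what lets $\Phi^{-1}$ act as an inverse on the relevant sub-monoid. Once that is in hand, one concludes
$$\Phi(I^{(s)})=\Phi\Bigl(\bigcap_{i=1}^{r}A_i\Bigr)=\bigcap_{i=1}^{r}\Phi(A_i)=\bigcap_{i=1}^{r}\tilde A_i=\tilde I^{(s)},$$
and the $s=1$ case of the first statement (or Corollary \ref{s.v.3} together with the equality of strong vertex covers) handles $\Phi(I)=\tilde I$ as the base case. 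I would close by remarking that combining $\Phi(I^s)=\tilde I^s$ with $\Phi(I^{(s)})=\tilde I^{(s)}$ and the injectivity of $\Phi$ immediately yields $I^{(s)}=I^s\iff \tilde I^{(s)}=\tilde I^s$, which is the intended application.
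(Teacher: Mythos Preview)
Your proposal is correct and follows essentially the same route as the paper: both arguments establish $\Phi(I)=\tilde I$ on generators, pass to powers via Lemma \ref{atiyah}(b), use Lemma \ref{cooper} and Lemma \ref{intersections} to reduce the symbolic-power statement to commuting $\Phi$ with a finite intersection, get $\subseteq$ from Lemma \ref{atiyah}(a), and obtain $\supseteq$ by pulling back along $\Phi$ using the ``$U$-block structure'' of the relevant monomials. One small imprecision worth tightening: when you write ``$\Phi^{-1}$ makes sense on $h$'' for an arbitrary monomial $h\in\bigcap_i\tilde A_i$, this is not literally true --- the paper (and your own argument, implicitly) only needs it for $h=\lcm(h^{(1)},\dots,h^{(r)})$ with each $h^{(i)}$ a minimal generator of $\tilde A_i$, and such an $\lcm$ inherits the block structure from the $h^{(i)}$; equivalently, restrict to $h\in\mathcal G(\tilde I^{(s)})$ from the start, exactly as the paper does.
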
        
\begin{proof}

By the definitions of ${I}$ and $\tilde{I}$, $ \Phi(\tilde{I})  =  {I}   $. Thus by Lemma \ref{atiyah}, we have $ \Phi(\tilde{I}^s)  = (\Phi(\tilde{I}) )^s =  {I}^s   $ for all $s \geq 1.$ Now we  claim that $  \Phi(\tilde{I}^{(s)})  = {I}^{(s)}.$ Let $C_{1_1},\ldots,C_{{r}_1}$ are the maximal strong vertex covers of both $ D $ and $ D^{\prime} $.       
	Let $C_{i_2},\ldots,C_{i_{t_i}}$ are the strong vertex covers of both $ D $ and $ D^{\prime} $ such that $C_{i_j}  \subset C_{i_1} $ for $2 \leq j \leq t_i$  and $1 \leq i \leq r.$ Then

$\Phi(\tilde{I}^{(s)}) =      \Phi((\tilde{I}_{C_{1_1}}  \cap \tilde{I}_{C_{1_2}} \cap   \cdots \cap  \tilde{I}_{C_{1_{t_1}}}    )^s \cap \cdots \cap (\tilde{I}_{C_{r_1}}  \cap \tilde{I}_{C_{r_2}} \cap   \cdots \cap  \tilde{I}_{C_{r_{t_r}}}    )^s)\hspace*{0.5cm}$ (by Lemma \ref{cooper})\\ $\hspace*{1.35cm}\subseteq   \Phi((\tilde{I}_{C_{1_1}}  \cap \tilde{I}_{C_{1_2}} \cap   \cdots \cap  \tilde{I}_{C_{1_{t_1}}}    )^s) \cap \cdots \cap \Phi((\tilde{I}_{C_{r_1}}  \cap \tilde{I}_{C_{r_2}} \cap   \cdots \cap  \tilde{I}_{C_{r_{t_r}}}    )^s)\\\hspace*{11.9cm}$ (by Lemma \ref{atiyah})\\
	$\hspace*{1.35cm}=   ({I}_{C_{1_1}}  \cap {I}_{C_{1_2}} \cap   \cdots \cap  {I}_{C_{1_{t_1}}} )^s  \cap \cdots \cap  ({I}_{C_{r_1}}  \cap {I}_{C_{r_2}} \cap   \cdots \cap  {I}_{C_{r_{t_r}}} )^s$\\\hspace*{9.3cm} (by Lemma \ref{atiyah} and Lemma \ref{intersections})\\
$\hspace*{1.35cm}=  {I}^{(s)} $	\hspace*{6.85cm} (by Lemma \ref{cooper}).        
	
	Hence $  \Phi(\tilde{I}^{(s)})  \subseteq {I}^{(s)}$ for all $ s \geq 1. $ 	It remains to show that $  {I}^{(s)}  \subseteq \Phi(\tilde{I}^{(s)})$ for all $ s \geq 1. $

	Let $ q \in \mathcal{G}({I}^{(s)}) = \mathcal{G}(({I}_{C_{1_1}}  \cap {I}_{C_{1_2}} \cap   \cdots \cap  {I}_{C_{1_{t_1}}} )^s  \cap \cdots \cap  ({I}_{C_{r_1}}  \cap {I}_{C_{r_2}} \cap   \cdots \cap  {I}_{C_{r_{t_r}}} )^s ) $.
	Then $q = \lcm(q_1,q_2,\ldots,q_r)$ for some $q_i \in \mathcal{G}( ({I}_{C_{i_1}}  \cap {I}_{C_{i_2}} \cap   \cdots \cap  {I}_{C_{i_{t_i}}})^s)$ where $1 \leq i \leq r.$ By Lemma \ref{intersections}, for $1 \leq i \leq r,$ we have\\
	$\hspace*{1cm}q_i = \displaystyle{(\prod_{x_j} x_j^{d_{j_i}} | x_j \in V \setminus U)(\prod_{x_k} (x_k^{w_k})^{e_{k_i}} | x_k \in  U)}$ for some    $d_{j_i},e_{k_i} \geq  0 .$

	
	Hence $\displaystyle{ q = \lcm(q_1,q_2,\ldots,q_r) =    (\prod_{x_j}    x_j^{u_{j}} | x_j \in V \setminus U)(\prod_{x_k} (x_k^{w_k})^{v_{k}} | x_k \in  U) }$ where each $u_j = \max\\ \{d_{j_1},\ldots,d_{j_r}\}$ and  $v_k = \max \{e_{k_1},\ldots,e_{k_r}\}$.
	Let $p_i = \displaystyle{(\prod_{x_j} x_j^{d_{j_i}} | x_j \in V \setminus U)(\prod_{x_k} (x_k)^{e_{k_i}} | x_k \in  U)}$ for $1 \leq i \leq r.$  
Then $  \lcm(p_1,p_2,\ldots,p_r) = \displaystyle{   (\prod_{x_j} x_j^{u_{j}} | x_j \in V \setminus U)(\prod_{x_k} (x_k)^{v_{k}} | x_k \in  U) }$   because each $u_j = \max \{d_{j_1},\ldots,d_{j_r}\}$ and  $v_k = \max \{e_{k_1},\ldots,e_{k_r}\}$. 
	 Since $\Phi(p_i) = q_i\in  ({I}_{C_{i_1}}  \cap {I}_{C_{i_2}} \cap   \cdots \cap  {I}_{C_{i_{t_i}}})^s=\Phi((\tilde{I}_{C_{i_1}}  \cap \tilde{I}_{C_{i_2}} \cap   \cdots \cap  \tilde{I}_{C_{i_{t_i}}}    )^s)  $ and $ \Phi $ is an injective, $p_i \in (\tilde{I}_{C_{i_1}}  \cap \tilde{I}_{C_{i_2}} \cap   \cdots \cap  \tilde{I}_{C_{i_{t_i}}}    )^s$ for $1 \leq i \leq r.$ Let $p = \lcm(p_1,p_2,\ldots,p_r).$  Then $ \Phi(p) = q $ where $p =   \lcm(p_1,p_2,\ldots,p_r) \in (\tilde{I}_{C_{1_1}}  \cap \tilde{I}_{C_{1_2}} \cap   \cdots \cap  \tilde{I}_{C_{1_{t_1}}}    )^s \cap \cdots \cap (\tilde{I}_{C_{r_1}}  \cap \tilde{I}_{C_{r_2}} \cap   \cdots \cap  \tilde{I}_{C_{r_{t_r}}}    )^s =\   \tilde{I}^{(s)}$.
Thus $q \in  \Phi(\tilde{I}^{(s)}) $.
	Hence $   {I}^{(s)} = \Phi(\tilde{I}^{(s)})  $ for all $ s \geq 1.$                	
\end{proof}

In the next result, we prove that the symbolic defects of edge ideals of $D$ and $D^{\prime}$ are same. 
\begin{proposition}\label{sdefect}
	Let $D$, $D^{\prime}$ and $\Phi$   are same as defined in Notation \ref{phi.}. Let   ${I}$ and $\tilde{I}$ be the edge ideals of   $D$ and $D^{\prime},$  respectively. Then  for each $s \geq 1,$ $$\sdefect(\tilde{I},s) = \sdefect(I,s).$$     
\end{proposition}

\begin{proof}
Fix any $s \geq 1.$ Let $X = \mathcal{G}({I}^{(s)}) \setminus {I}^s$ and $X^{\prime} = \mathcal{G}(\tilde{I}^{(s)}) \setminus \tilde{I}^s$.
First we claim that $\Phi(  X^{\prime} ) \subseteq X.$

Suppose $ p $ 
 $\in  \mathcal{G}(\tilde{I}^{(s)}) \setminus \tilde{I}^s $.
By the similar argument  used to get the general form of any element of $ \mathcal{G}({I}^{(s)})  $ in Theorem \ref{sym.theorem.1}, we can write
$\displaystyle{ p =    (\prod_{x_j}    x_j^{u_{j}} | x_j \in V \setminus U)(\prod_{x_k} (x_k)^{v_{k}} | x_k \in  U) }$
for   some $ u_j , v_k \geq    0. $ By Theorem \ref{sym.theorem.1}, $\displaystyle{ \Phi(p) =    (\prod_{x_j}    x_j^{u_{j}} | x_j \in V \setminus U)(\prod_{x_k} (x_k^{w_k})^{v_{k}} | x_k \in  U) } \in {I}^{(s)}$. Let $q = \Phi(p).$ Suppose $q \notin $  $ \mathcal{G}({I}^{(s)})  $. Then $q$ must be multiple of some element of $ \mathcal{G}({{I}}^{(s)})$ (say $q^{\prime}$). By Theorem \ref{sym.theorem.1}, we can write $ q^{\prime}=$$\displaystyle{      (\prod_{x_j}    x_j^{{u_{j}}^{\prime}} | x_j \in V \setminus U)(\prod_{x_k} (x_k^{w_k})^{{v_{k}}^{\prime}} | x_k \in  U) }$  where  each $   {{u_{j}}^{\prime}} \leq u_j ,    {{v_{k}}^{\prime}} \leq  v_k $ and at least one $   {{u_{j}}^{\prime}} < u_j $ or $   {{v_{k}}^{\prime}} <  v_k $.
Let $\displaystyle{ p^{\prime} =     (\prod_{x_j}    x_j^{{u_{j}}^{\prime}} | x_j \in V \setminus U)}\\{(\prod_{x_k} (x_k)^{{v_{k}}^{\prime}} | x_k \in  U) }$. By Theorem \ref{sym.theorem.1}, we have ${I}^{(s)} = \Phi(\tilde{I}^{(s)})$. Since $ \Phi(p^{\prime}) = q^{\prime} \in {I}^{(s)} = \Phi(\tilde{I}^{(s)})$ and $ \Phi $ is an injective, $p^{\prime} \in   \tilde{I}^{(s)}$. Thus $ p $ 
 is  multiple of $p^{\prime}\in  \tilde{I}^{(s)},$ which is a contradiction  because $p \in  \mathcal{G}(\tilde{I}^{(s)})  $. Therefore $q \in \mathcal{G}({I}^{(s)})$. Since $ p $ 
  $\notin   \tilde{I}^s ,$ it is easy to see that $ q $ 
  $\notin   {I}^s$. So $ q $ 
  $\in  \mathcal{G}({I}^{(s)}) \setminus   {I}^s $ and hence $\Phi(  X^{\prime} ) \subseteq X.$

Now consider the map $	\Phi|_{X^{\prime}} : X^{\prime} \longrightarrow X. $ It is enough to show $\Phi|_{X^{\prime}}$ is bijective.
We know $\Phi|_{X^{\prime}}$ is injective.  
Suppose $ g \in  \mathcal{G}({I}^{(s)}) \setminus {I}^s $. Then by   Theorem \ref{sym.theorem.1}, there exists $ f \in  \mathcal{G}(\tilde{I}^{(s)}) \setminus \tilde{I}^s $ such that $ \Phi|_{X^{\prime}}(f) = g.$ So $\Phi|_{X^{\prime}}$ is surjective and hence $\Phi|_{X^{\prime}}$  is bijective.  
\end{proof}

\begin{corollary}\label{cor}
	Let $D$,  $D^{\prime}$ and $\Phi$   are same as defined in Notation \ref{phi.}. Let   ${I}$ and $\tilde{I}$ be the edge ideals of   $D$ and $D^{\prime},$  respectively. Then      $ {\tilde{I}}^{(s)} = {\tilde{I}}^s $  if and only if $ {I}^{(s)} = {I}^s $   for each $s \geq 1.$    
\end{corollary}

\begin{proof}  
By Proposition \ref{sdefect}, $\sdefect(\tilde{I},s) =0$ if and only if $ \sdefect(I,s)=0$ for each $s \geq 1.$ Hence the proof follows.  
\end{proof}

If all the vertices of   $~V^{+}(D)$ are sinks, we get the following two results. 

\begin{corollary}\label{cor2}  
Let $D$ be a  weighted oriented graph $ D $ where the vertices of   $~V^{+}(D)$ are sinks and its  underlying graph is $G$.  Then      $ {{I(G)}}^{(s)} = {{I(G)}}^s $  if and only if $ {I(D)}^{(s)} = {I(D)}^s $   for each $s \geq 1.$    .  	
\end{corollary}

\begin{proof}
Let $D^{\prime}$  is same as defined in Notation \ref{phi.}. Then $D^{\prime} = G$ and the proof  follows  from Corollary \ref{cor}.         
\end{proof}

\begin{corollary}\label{cor3} 
	Let $D$ be  a  weighted oriented graph  where the vertices of   $~V^{+}(D)$ are sinks and its underlying graph is $G$. Then   $G$ is bipartite if and only if $ {{I(D)}^{(s)}} = {{I(D)}^{s}} $ for all $ s \geq 2 $.    .  	
\end{corollary}  

\begin{proof}
It follows from \cite[Theorem 5.9]{simis} and Corollary \ref{cor2}.  		
%
\end{proof}    

\begin{remark}
As an application of Theorem \ref{sym.theorem.1} and Corollary \ref{cor}, by studying
the symbolic powers of edge ideals of one class of weighted oriented graphs, we can get
information about the symbolic powers of edge ideals of another class of weighted oriented
graphs. When we try to find the necessary and sufficient condition for the equality of
ordinary and symbolic powers of edge ideals of a certain class of weighted oriented
	graphs, as an application of Corollary \ref{cor}, we can omit the checking of equality of
	ordinary and symbolic powers of the edge ideals of those weighted oriented graphs where
	some vertex with non-trivial weight is sink. Hence we need to check the equality only
	for a smaller class of graphs.
\end{remark}

In the next result, as an application of Corollary \ref{cor}, we give necessary and sufficient
condition for the equality of  ordinary and symbolic powers of edge ideals of
weighted oriented even cycles of length $ 4 $ which are not naturally oriented.

	\begin{proposition}\label{evencycle4}
	Let    $D$ be  a weighted oriented even cycle which
	is not naturally oriented with underlying graph  $C_{4} = (x_1,x_2,x_3,x_{4})$  and at least one vertex of $D$ has non-trivial weight. Then $ I(D)^{(s)} = I(D)^s $ for all $s \geq 2$   if and only if $D$ is not of
	class $ (7) $ (See Figure \ref{fig.2}).
\end{proposition}

\begin{proof}
If all vertices of $ D $ have non-trivial weights, then $ D $ is naturally oriented. So
$ D $ has at most three vertices with non-trivial weights. Let $D^{\prime} $ is same as defined  in
Notation \ref{phi.}. Let $I= I(D)$ and $\tilde{I}= I(D^{\prime})$.  We check the equality of ordinary and symbolic powers by
considering different cases depending upon the number of vertices with non-trivial weights. We do not consider any weighted oriented even cycle which can be regarded as some weighted naturally oriented even cycle by changing the orientation of edges.

\textbf{Case (1)} $ D $ has only one vertex with non-trivial weight.

Then $ D $ is of class $ (1) $ (see Figure \ref{fig.2}). Here we can think $D^{\prime} $
as a simple bipartite graph. Hence by Corollary \ref{cor3},  we get $ I^{(s)} = I^s $ for all $s \geq 2$.


\begin{figure}[!ht]
	\begin{tikzpicture}[scale=0.85]
	\begin{scope}[ thick, every node/.style={sloped,allow upside down}] 
	\definecolor{ultramarine}{rgb}{0.07, 0.04, 0.56} 
	\definecolor{zaffre}{rgb}{0.0, 0.08, 0.66}   
	\draw[fill, black] (0,0) --node {\midarrow}(0,2);
	\draw[fill, black] (2,0) --node {\midarrow}(2,2);
	\draw[fill, black] (2,2) --node {\midarrow}(0,2);
	\draw[fill, black] (0,0) --node {\midarrow}(2,0);
	\draw [fill] [fill] (0,0) circle [radius=0.04];
	\draw[fill] [fill] (2,0) circle [radius=0.04];
	\draw[fill] [fill] (2,2) circle [radius=0.04];
	\draw[fill] [fill] (0,2) circle [radius=0.07];

	\node at (0,-0.4) {$1$};
	\node at (2,-0.4) {$1$};
	\node at (2,2.4) {$1$};
	\node at (0,2.4) {$w \neq 1$};
	
	\node at (1,-0.6) {$(1)$};
	
	
	\draw[fill, black] (4,0) --node {\midarrow}(4,2);
	\draw[fill, black] (4,2) --node {\midarrow}(6,2);
	\draw[fill, black] (6,0) --node {\midarrow}(6,2);
	\draw[fill, black] (4,0) --node {\midarrow}(6,0);
	\draw [fill] [fill] (4,0) circle [radius=0.04];
	\draw[fill] [fill] (6,0) circle [radius=0.04];
	\draw[fill] [fill] (6,2) circle [radius=0.07];
	\draw[fill] [fill] (4,2) circle [radius=0.04];

	\node at (4,-0.4) {$1$};
	\node at (6,-0.4) {$1$};
	\node at (6,2.4) {$w \neq 1$};
	\node at (4,2.4) {$w \neq 1$};

	\node at (5.1,-0.6) {$(2)$};
	
\draw[fill, black] (8,0) --node {\midarrow}(8,2);
\draw[fill, black] (8,2) --node {\midarrow}(10,2);
\draw[fill, black] (10,2) --node {\midarrow}(10,0);
\draw[fill, black] (8,0) --node {\midarrow}(10,0);
\draw [fill] [fill] (8,0) circle [radius=0.04];
\draw[fill] [fill] (10,0) circle [radius=0.07];
\draw[fill] [fill] (10,2) circle [radius=0.04];
\draw[fill] [fill] (8,2) circle [radius=0.04];

\node at (8,-0.4) {$1$};
\node at (10,-0.4) {$w \neq 1$};
\node at (10,2.4) {$1$};
\node at (8,2.4) {$w \neq 1$};

\node at (9.1,-0.6) {$(3)$};

\draw[fill, black] (12,0) --node {\midarrow}(12,2);
\draw[fill, black] (14,2) --node {\midarrow}(12,2);
\draw[fill, black] (14,2) --node {\midarrow}(14,0);
\draw[fill, black] (12,0) --node {\midarrow}(14,0);
\draw [fill] [fill] (12,0) circle [radius=0.04];
\draw[fill] [fill] (14,0) circle [radius=0.07];
\draw[fill] [fill] (14,2) circle [radius=0.04];
\draw[fill] [fill] (12,2) circle [radius=0.07];

\node at (12,-0.4) {$1$};
\node at (14,-0.4) {$w \neq 1$};
\node at (14,2.4) {$1$};
\node at (12,2.4) {$w \neq 1$};

\node at (13.1,-0.6) {$(4)$};

\draw[fill, black] (0,-4) --node {\midarrow}(0,-2);
\draw[fill, black] (2,-4) --node {\midarrow}(2,-2);
\draw[fill, black] (0,-2) --node {\midarrow}(2,-2);
\draw[fill, black] (0,-4) --node {\midarrow}(2,-4);
\draw [fill] [fill] (0,-4) circle [radius=0.04];
\draw[fill] [fill] (2,-4) circle [radius=0.04];
\draw[fill] [fill] (2,-2) circle [radius=0.04];
\draw[fill] [fill] (0,-2) circle [radius=0.04];

\node at (0,-4.4) {$1$};
\node at (2,-4.4) {$w \neq 1$};
\node at (2,-1.6) {$1$};
\node at (0,-1.6) {$w \neq 1$};

\node at (1,-4.7) {$(5)$};

	\draw[fill, black] (4,-4) --node {\midarrow}(4,-2);
\draw[fill, black] (6,-4) --node {\midarrow}(6,-2);
\draw[fill, black] (4,-2) --node {\midarrow}(6,-2);
\draw[fill, black] (4,-4) --node {\midarrow}(6,-4);
\draw [fill] [fill] (4,-4) circle [radius=0.04];
\draw[fill] [fill] (6,-4) circle [radius=0.04];
\draw[fill] [fill] (6,-2) circle [radius=0.07];
\draw[fill] [fill] (4,-2) circle [radius=0.04];

\node at (4,-4.4) {$1$};
\node at (6,-4.4) {$w \neq 1$};
\node at (6,-1.6) {$w \neq 1$};
\node at (4,-1.6) {$w \neq 1$};

\node at (5,-4.7) {$(6)$};

	\draw[fill, black] (8,-4) --node {\midarrow}(8,-2);
\draw[fill, black] (10,-2) --node {\midarrow}(10,-4);
\draw[fill, black] (8,-2) --node {\midarrow}(10,-2);
\draw[fill, black] (8,-4) --node {\midarrow}(10,-4);
\draw [fill] [fill] (8,-4) circle [radius=0.04];
\draw[fill] [fill] (10,-4) circle [radius=0.07];
\draw[fill] [fill] (10,-2) circle [radius=0.04];
\draw[fill] [fill] (8,-2) circle [radius=0.04];

\node at (8,-4.4) {$1$};
\node at (10,-4.4) {$w \neq 1$};
\node at (10,-1.6) {$w \neq 1$};
\node at (8,-1.6) {$w \neq 1$};

\node at (9,-4.7) {$(7)$};

	\end{scope}
	\end{tikzpicture}
	\caption{All classes of weighted oriented even cycles of length $ 4 $ which
		are not naturally oriented.}\label{fig.2}
\end{figure}
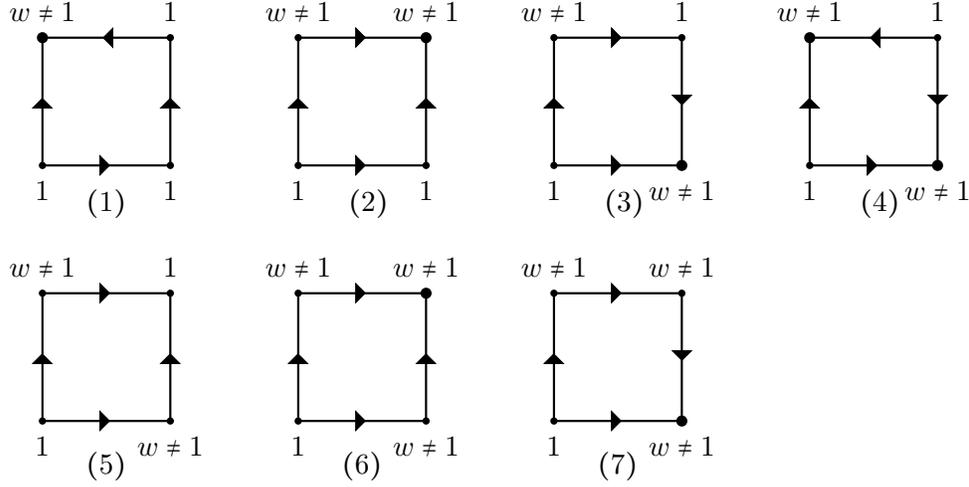

\textbf{Case (2)} $ D $ has only two vertices with non-trivial weights.

Then $ D $ is one of the classes $ (2) , (3), (4)$ and $ (5) $ (see Figure \ref{fig.2}). If $ D $ is of class (2), then we can think $D^{\prime} $
as a weighted naturally oriented even cycle where only one vertex has non-trivial weight. Using
Proposition \ref{evencycle2}, we get $ \tilde{I}^{(s)} = \tilde{I}^s $ for all $s \geq 2$. Hence by Corollary \ref{cor}, we have $ I^{(s)} = I^s $ for all $s \geq 2$. If $ D $ is of class $ (3) $, then by the similar argument as in class $ (2) $, $ I^{(s)} = I^s $ for all $s \geq 2$. If $ D $ is of class $ (4) $, then by the similar argument as in class $ (1) $, $ I^{(s)} = I^s $ for all $s \geq 2$. Now assume $ D $ is of class $ (5) $. There is no vertex with non-trivial weight which is sink. Without loss of generality we can assume that $ w(x_2) \neq 1$ and $ w(x_4) \neq 1 $.
Then $ I = (x_1x_2^{w_2}, x_2x_3, x_3x_4, x_1x_4^{w_4}).$  By the similar argument as in Remark \ref{one.weight}, we find 
$ I^{(s)} = (x_1, x_3)^s \bigcap ((x_2^{w_2}
, x_3, x_4^{w_4}) \cap (x_2, x_4))^s = (x_1, x_3)^s \cap (x_2^{w_2}
, x_2x_3, x_3x_4, x_4^{w_4})^s $. 
Let $ \bar{m} \in  \mathcal{G}(I^{(s)}). $ Then $ \bar{m} = \lcm(m_1,m_2) $ for some $ m_1  \in  \mathcal{G}((x_1, x_3)^s) $ and $ m_2  \in  \mathcal{G}((x_2^{w_2}
, x_2x_3, x_3x_4, x_4^{w_4})^s) $. Thus $ m_1 = x_1^{a_1}x_3^{a_2} $ and $ m_2 = (x_2^{w_2})^{b_1}(x_2x_3)^{b_2}(x_3x_4)^{b_3}(x_4^{w_4})^{b_4} $ for some
$ a_i, b_i \geq 0 $ with $ a_1 + a_2 = s $ and $ b_1 + b_2 + b_3 + b_4 = s. $


Assume that $ a_1 \geq b_1 + b_4.$

Then $ x_1^{a_1}(x_2^{w_2})^{b_1}(x_2x_3)^{b_2}(x_3x_4)^{b_3}(x_4^{w_4})^{b_4}
 \divides \lcm(m_1,m_2) = \bar{m}.$ Since $ x_1x_2^{w_2}
$ and $ x_1x_4^{w_4} \in 
\mathcal{G}(I), $ $ x_1^{a_1}(x_2^{w_2})^{b_1}(x_4^{w_4})^{b_4}$ can be expressed as a multiple of product of $ b_1 + b_4 $ elements of
$ \mathcal{G}(I). $ So $ \bar{m} \in I^{(b_2+b_3)+(b_1+b_4) }= I^s.$

Now assume that $ a_1 < b_1 + b_4.$ Then $ a_2 > b_2 + b_3.$

Thus $ \bar{m} = \lcm(m_1,m_2) = x_1^{a_1}x_3^{a_2-(b_2+b_3)}(x_2^{w_2})^{b_1}(x_2x_3)^{b_2}(x_3x_4)^{b_3}(x_4^{w_4})^{b_4} $.


 Here $ a_1 + a_2 -
(b_2 + b_3) = s - (b_2 + b_3) = b_1 + b_4.$ Thus $x_1^{a_1}x_3^{a_2-(b_2+b_3)}(x_2^{w_2})^{b_1}(x_4^{w_4})^{b_4}$ can be expressed as a multiple of product of $ b_1 + b_4 $ elements of
$ \mathcal{G}(I). $ So $ \bar{m} \in I^{(b_2+b_3)+(b_1+b_4) }= I^s.$

Hence $ I^{(s)} = I^s $ for all $s \geq 2$.

\textbf{Case (3)} $ D $ has only three vertices with non-trivial weights.

Then $ D $ is one of the classes $(6)$ and $ (7) $ (see Figure \ref{fig.2}). If $ D $ is of class $ (6) $, then  $D^{\prime} $ is of class $ (5) $. We know $ \tilde{I}^{(s)} = \tilde{I}^s $ for all $s \geq 2$. Hence by Corollary \ref{cor}, we have $ I^{(s)} = I^s $ for all $s \geq 2$. If $ D $ is of class $ (7) $,
then we can think $D^{\prime} $ as a weighted naturally oriented even cycle where only two consecutive vertices have non-trivial weights. By 
Proposition \ref{evencycle2}, we have $ \tilde{I}^{(s)} \neq \tilde{I}^s $  for some $s \geq 2$. Then by Corollary \ref{cor},  we get $ I^{(s)} \neq I^s $. Hence the proof follows.	
\end{proof}

Next we see another application of Corollary \ref{cor}  to weighted oriented star graphs.
\begin{definition}
A star graph $ S_n $ of order $ n $ is a tree on $n+1$ vertices with one vertex having  degree $n$ and the other $ n $ vertices having  degree $1.$     
\end{definition}

In the next theorem, we show that the  ordinary and
symbolic powers of edge ideal of any weighted  oriented star graph are equal.
\begin{theorem}\label{stargraph}  
	Let $D$  be  a weighted oriented star graph with underlying graph is $S_n$ for some $n \geq 2$. Then  $ I(D)^{(s)} = I(D)^s $ for all $s \geq 2.$          
\end{theorem}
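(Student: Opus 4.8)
The plan is to distinguish cases according to how the edges at the centre vertex of the underlying star $S_n$ are oriented, and to observe that in all but one case $D$ is a weighted oriented bipartite graph whose non-trivially weighted vertices are sinks, so that Lemma~\ref{DG} applies verbatim. Let $x_0$ be the centre of $S_n$ and $x_1,\dots,x_n$ the leaves, and set $A=\{x_i : (x_0,x_i)\in E(D)\}$, $B=\{x_i : (x_i,x_0)\in E(D)\}$, so that $A\sqcup B=\{x_1,\dots,x_n\}$; every leaf in $B$ is a source and hence has weight $1$. The containment $I^s\subseteq I^{(s)}$ is automatic, so only $I^{(s)}\subseteq I^s$ needs proof.

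First, if $B=\emptyset$ then $x_0$ is a source, so $w(x_0)=1$, and the only vertices carrying a non-trivial weight are leaves in $A$, all of which are sinks; as $S_n$ is bipartite, Lemma~\ref{DG} gives $I^{(s)}=I^s$. The case $A=\emptyset$ is symmetric: every leaf is then a source and the only possibly non-trivially weighted vertex is $x_0$, which is now a sink, so Lemma~\ref{DG} applies again. Finally, if $A\neq\emptyset\neq B$ but $w(x_0)=1$, the non-trivially weighted vertices are once more sinks (leaves in $A$) and Lemma~\ref{DG} finishes this case too. Thus we are reduced to the situation $A\neq\emptyset\neq B$ and $w:=w(x_0)\ge 2$, which is the heart of the argument.

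In that situation I would first list the strong vertex covers of $D$. The minimal vertex covers are $\{x_0\}$ and $L:=\{x_1,\dots,x_n\}$. For a non-minimal cover $C=\{x_0\}\cup S$ with $\emptyset\neq S\subsetneq L$ one has $L_3^D(C)=S$ by Lemma~\ref{L3}; since a leaf in $B$ has empty in-neighbourhood, the strong condition (Remark~\ref{s.v.1}) forbids $S$ from meeting $B$, and since $N_D^-(x_i)=\{x_0\}$ for $x_i\in A$ it forces $x_0\notin L_1^D(C)$, i.e.\ $A\subseteq S$; hence $S=A$ and the unique non-minimal strong vertex cover is $C^{*}:=\{x_0\}\cup A$. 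A direct computation of the associated irreducible ideals gives $I_{\{x_0\}}=(x_0)$, $I_{C^{*}}=(x_0^{\,w})+(x_i^{\,w_i}\mid x_i\in A)$ and $I_L=(x_i\mid x_i\in B)+(x_i^{\,w_i}\mid x_i\in A)$; the maximal strong covers are $C^{*}$ and $L$, with $\{x_0\}\subsetneq C^{*}$. By Lemma~\ref{cooper},
\[
I^{(s)}=\bigl(I_{C^{*}}\cap I_{\{x_0\}}\bigr)^s\cap I_L^{\,s}
=\bigl(x_0^{\,w},\ x_0x_i^{\,w_i}\mid x_i\in A\bigr)^s\cap\bigl(x_i\mid x_i\in B,\ x_i^{\,w_i}\mid x_i\in A\bigr)^s.
\]

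It then remains to show that every monomial $m=x_0^{\,e}\prod_{x_i\in A}x_i^{\,f_i}\prod_{x_k\in B}x_k^{\,g_k}$ in this intersection lies in $I^s$. Put $\Phi=\sum_{x_i\in A}\lfloor f_i/w_i\rfloor$ and $\Gamma=\sum_{x_k\in B}g_k$. One checks that membership of $m$ in the second factor is equivalent to $\Phi+\Gamma\ge s$, and membership in the first factor is equivalent to $e\ge ws-(w-1)\min(s,\Phi)$ (in particular $e\ge s$). On the other hand, writing a product of $s$ generators of $I$ with exactly $N$ factors of the form $x_0^{\,w}x_k$ with $x_k\in B$, one sees that $m\in I^s$ iff there is an integer $N$ with $\max(0,s-\Phi)\le N\le\min\bigl(\Gamma,\lfloor(e-s)/(w-1)\rfloor\bigr)$. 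Now $\Phi+\Gamma\ge s$ yields $\max(0,s-\Phi)\le\Gamma$, while $e\ge ws-(w-1)\min(s,\Phi)$ yields $\max(0,s-\Phi)\le\lfloor(e-s)/(w-1)\rfloor$ (both being trivial when $\Phi\ge s$); hence a valid $N$ exists and $m\in I^s$. This gives $I^{(s)}\subseteq I^s$, and with it the theorem. The main obstacle is exactly this last step — the bookkeeping that balances the exponent $e$ of the central variable against how many ``incoming'' generators $x_0^{\,w}x_k$ are used — while everything before it is a matter of identifying the right case and invoking Lemma~\ref{DG} or Lemma~\ref{cooper}.
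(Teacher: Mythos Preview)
Your argument is correct and follows essentially the same route as the paper: reduce via Lemma~\ref{DG} to the case where the centre has non-trivial weight and both in- and out-leaves occur, identify the three strong vertex covers $\{x_0\}$, $L$, $\{x_0\}\cup A$, and apply Lemma~\ref{cooper} to obtain the same two-factor intersection. The only difference is cosmetic---the paper verifies $I^{(s)}\subseteq I^s$ by writing a minimal generator as $\lcm(m_1,m_2)$ and splitting into three explicit cases, whereas you phrase the same balancing of the central exponent as a single inequality in $\Phi,\Gamma,N$ (your ``iff'' for $m\in I^s$ should strictly also include $N\le s$, but since you end up choosing $N=\max(0,s-\Phi)\le s$ this does not affect the conclusion).
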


\begin{proof}
	
	Let $V(D) =   \{x_0,x_1,\ldots,x_n     \}$ with $\deg_D(x_0) = n$ and $\deg_D(x_i)=1$ if $i \neq 0$. Here $E(S_n) =  \{ \{x_0,x_1   \},\{x_0,x_2   \},\ldots,\{x_0,x_n\} \}$.

	\textbf{Case (1)} Assume that $w(x_0) = 1.$
	
	If $w(x_i) \neq 1$ for some $i \neq 0$, then $(x_0,x_i)$   $\in E(D).$ This implies  $x_i$ is a sink vertex. So all vertices of $V^+(D)$ are sinks. Thus by Corollary \ref{cor3}, we have  $ I(D)^{(s)} = I(D)^s $  for all $s \geq 2.$

	\textbf{Case (2)} Assume that $w(x_0) \neq 1,$ i.e., $x_0 \in V^+(D).$
	
	Then $x_0$ is not a source vertex, i.e., $ N_D^-(x_0) \neq \phi. $

	$\textbf{\underline{\mbox{Case  (2.a)}}}$ Suppose  $ N_D^+(x_0) = \phi. $
	
	Then $x_0$ is a sink vertex. This implies that each $x_i$ for  $i \neq 0$ is a source vertex. So $w_i  = 1  $ for each   $i \neq 0$. Hence the only vertex of $V^+(D)$  is $x_0$ and it is sink. Then by Corollary \ref{cor3},  we have  $ I(D)^{(s)} = I(D)^s $  for all $s \geq 2.$

	$\textbf{\underline{\mbox{Case  (2.b)}}}$ Suppose  $ N_D^+(x_0) \neq \phi. $
	
	Without loss of generality we can assume that  $ N_D^+(x_0) = \{ x_1,x_2,\ldots,x_r \} $ and $ N_D^-(x_0) = \{ x_{r+1},x_{r+2},\ldots,x_n\}$ for some $r \geq 1$. If $ x_i \in V^+(D) $ for some $ i \in  [r], $ then $ x_i $ is sink. Let $D^{\prime} $ is same as defined  in
	Notation \ref{phi.}. Then $w_i=1$ for $1 \leq i \leq n$  in $D^{\prime}  $. Let $\tilde{I}= I(D^{\prime})$.
    By Corollary \ref{cor}, it is enough to show that $\tilde{I}^{(s)} = \tilde{I}^s $ for all $s \geq 2$.
   	
	 Note that the two minimal vertex covers of $D^{\prime}$ are $\{x_0\}$ and $\{x_1,\ldots,x_n     \}$. By Lemma \ref{minimal to strong}, these are strong vertex covers of $D^{\prime}.$ 
	
	Consider a vertex cover $C= \{x_0,x_1,x_2,\ldots,x_r\}$. Then $C^c =  \{ x_{r+1},x_{r+2},\ldots,x_n\}$. Here $ N_{D^{\prime}}^+(x_0) \cap C^c = \phi $ and $ N_{D^{\prime}}^-(x_0) \cap C^c \neq \phi  $. So $x_0 \in L_2^{D^{\prime}}(C)$.
	By Lemma \ref{L3}, $  L_3^{D^{\prime}}(C) = \{ x_1,x_2,\ldots,x_r \}.$ Since  $ x_0 \in  N_{D^{\prime}}^-(x_i) \cap V^+({D^{\prime}}) \cap L_2^{D^{\prime}}(C)$ for $1 \leq i \leq r,$ 
	$C$ is a strong vertex cover of ${D^{\prime}}$.
	
Consider $C^{\prime} \subsetneq C$ as a  vertex cover of ${D^{\prime}}.$ Since $x_{r+1} \notin C^{\prime},$  $x_0 \in C^{\prime}$. Thus there exist $j$ and $k \in [r]$ such that $x_j\in C^{\prime}$ and $x_k \notin C^{\prime}.$ Without loss of generality $x_j = x_1$ and $x_k = x_2.$ By Lemma \ref{L3},   $x_{1} \in  L_3^{D^{\prime}}(C^{\prime})$. Here $N_{D^{\prime}}^-(x_1) = \{x_0\}  \subseteq L_1^{D^{\prime}}(C^{\prime})$ because $  x_2 \in  N_{D^{\prime}}^+(x_0) \cap {C^{\prime}}^c.$ Hence by Remark \ref{s.v.1}, $C^{\prime}$ is not strong. 
	
Let $C_1= \{x_0\}$, $C_2 = \{x_1,\ldots,x_n     \}$ and $C_3 = \{x_0,x_1,\ldots,x_r     \}$.    
Suppose  there exists a  strong vertex cover $C_4$ of ${D^{\prime}}$ other than $C_1,C_2$ and $C_3$. We know that any vertex cover which is a proper subset of $C_3$ can not be strong.
Thus $C_4$ must contain $x_0$ and some vertex $x_i \in \{ x_{r+1},x_{r+2},\ldots,x_n\}$. Without loss of generality we can assume that $C_4$  contains $x_0$ and  $x_{r+1}$.  
	By Lemma \ref{L3},   $x_{r+1} \in  L_3^{D^{\prime}}(C_4) $. Since $N_{D^{\prime}}^-(x_{r+1}) = \phi  ,$ by Remark \ref{s.v.1}, $C_4$ is not a strong vertex cover of ${D^{\prime}}$. Hence $C_1,C_2$ and $C_3$ are the only  strong vertex covers of ${D^{\prime}}.$ Consider  $C_1= \{x_0\}.$ Here $   x_1 \in  N_{D^{\prime}}^+(x_0) \cap C_1^c    $. So  $ L_1^{D^{\prime}}(C_1) = \{x_0\}$. Hence $\tilde{I}_{C_1} =(x_0).$
	Consider  $C_2 = \{x_1,\ldots,x_n     \}.$
	Then   $\tilde{I}_{C_2} =( x_1,\ldots, x_r, x_{r+1},\ldots,x_n ).$
	Consider   $C_3 = \{x_0,x_1,\ldots,x_r     \}.$ We know $  L_2^{D^{\prime}}(C_3) = \{ x_0 \} $. This implies $\tilde{I}_{C_3} =( x_0^{w_0}, x_1,\ldots, x_r ).$
	Hence by Lemma \ref{cooper}, we have
\begin{align*}
\tilde{I}^{(s)} &= ( ( x_0^{w_0}, x_1,\ldots, x_r )\cap(x_0)   )^s \bigcap ( x_1,\ldots, x_r, x_{r+1},\ldots,x_n  )^s\\
&= (  x_0^{w_0}, x_0x_1,\ldots, x_0x_r )^s \bigcap ( x_1,\ldots, x_r, x_{r+1},\ldots,x_n  )^s. 
\end{align*}
 Let $\bar{m} \in \mathcal{G}(\tilde{I}^{(s)} )$. Then $\bar{m} = \lcm(m_1,m_2)$ for some $m_1 \in \mathcal{G}((  x_0^{w_0}, x_0x_1,\ldots ,    x_0x_r )^s  )$ and\\ $m_2 \in \mathcal{G}(( x_1,\ldots ,   x_r, x_{r+1},\ldots,x_n  )^s ).$ Thus   $ m_1 =   (x_0^{w_0})^{a_0} (x_0x_1)^{a_1}\cdots (x_0x_r)^{a_r}$   and  $ m_2 =  x_1^{b_1} \cdots   x_r^{b_r} x_{r+1}^{b_{r+1}}  \cdots x_n^{b_n}    $ for some $a_i, b_i \geq 0$ with   $\displaystyle{ \sum_{i=0}^{r}a_i  = s}$ and $\displaystyle{ \sum_{i=1}^{n}b_i  = s}.$

If $a_0 = 0,$ then   $m_1 \in \tilde{I}^s$ and so   $\bar{m} \in \tilde{I}^s$.
	
Now we assume that $a_0 \neq 0$ and $b_{r+1}+\cdots+b_n \geq  a_0.$ Here $m_1x_{r+1}^{b_{r+1}}  \cdots x_n^{b_n}\divides\lcm(m_1,m_2) =\bar{m}$.
	Then we can express   $ 	m_1x_{r+1}^{b_{r+1}}  \cdots x_n^{b_n}= (x_0x_1)^{a_1}\ldots (x_0x_r)^{a_r} [(x_0^{w_0})^{a_0}x_{r+1}^{b_{r+1}}  \cdots x_n^{b_n}]. $ Since $x_{i}x_0^{w_0} \in \mathcal{G}(\tilde{I})$ for $r+1 \leq i \leq n,$ $(x_0^{w_0})^{a_0}x_{r+1}^{b_{r+1}}  \cdots x_n^{b_n}$ can be expressed as a multiple of product of $a_0$ elements of $\mathcal{G}(\tilde{I})$. So $m_1x_{r+1}^{b_{r+1}}  \cdots x_n^{b_n} \in I^{(a_1+a_2+\cdots+a_r)+a_0} = \tilde{I}^s$. Hence $\bar{m} \in \tilde{I}^s$.

	Finally, we assume that $a_0 \neq 0$ and $b_{r+1}+\cdots+b_n <  a_0.$ Here $ (x_0^{w_0})^{a_0}x_0^{a_1 +\cdots+a_r} m_2 \divides\lcm(m_1,m_2) = \bar{m}$. Then we can express    
	\begin{align*}
\hspace*{0.5cm}	&\hspace*{0.43cm}(x_0^{w_0})^{a_0}x_0^{a_1 +\cdots+a_r} m_2\\
	&=(x_0^{w_0})^{a_0}x_0^{a_1 +\cdots+a_r} x_1^{b_1} \cdots  x_r^{b_r} x_{r+1}^{b_{r+1}}  \cdots x_n^{b_n}\\
	&= (x_0^{w_0})^{a_0 -(b_{r+1}+\cdots+b_n  )  }x_0^{a_1 +\cdots+a_r} x_1^{b_1} \cdots  x_r^{b_r} [x_{r+1}^{b_{r+1}}  \cdots x_n^{b_n}(x_0^{w_0})^{(b_{r+1}+\cdots+b_n  )  }] \\
	&= (x_0^{w_0-1})^{a_0 -(b_{r+1}+\cdots+b_n  )  }[x_0^{a_0-(b_{r+1}+\cdots+b_n  ) +a_1 +\cdots+a_r} x_1^{b_1} \cdots  x_r^{b_r}] [(x_{r+1}x_0^{w_0})^{b_{r+1}} \cdots (x_nx_0^{w_0})^{b_n}]\\
	&= (x_0^{w_0-1})^{a_0 -(b_{r+1}+\cdots+b_n  )      }[x_0^{b_1 +\cdots+b_r} x_1^{b_1} \cdots  x_r^{b_r}] [(x_{r+1}x_0^{w_0})^{b_{r+1}} \cdots (x_nx_0^{w_0})^{b_n}]\\
	&= (x_0^{w_0-1})^{a_0 -(b_{r+1}+\cdots+b_n  )      } [(x_0x_1)^{b_1} \cdots  (x_0 x_r)^{b_r}] [(x_{r+1}x_0^{w_0})^{b_{r+1}} \cdots (x_nx_0^{w_0})^{b_n}]. 
	\end{align*} 
	So     $ (x_0^{w_0})^{a_0}x_0^{a_1 +\cdots+a_r} m_2 \in \tilde{I}^{(b_1   +\cdots+b_r) + (b_{r+1} + \cdots+ b_n)} =   \tilde{I}^s$. Therefore $\bar{m} \in \tilde{I}^s$. Hence the proof follows.
\end{proof}

\end{document}